\numberwithin{equation}{section}
\theoremstyle{plain}
\newtheorem{theorem}{Theorem}[section]
\newtheorem{proposition}[theorem]{Proposition}
\newtheorem{lemma}[theorem]{Lemma}
\newtheorem{corollary}[theorem]{Corollary}
\theoremstyle{definition}
\theoremstyle{remark}
\newtheorem{example}[theorem]{Example}
\newtheorem{examples}[theorem]{Examples}
\newcommand{\bbR}{\mathbb{R}}
\newcommand{\bbC}{\mathbb{C}}
\newcommand{\calC}{\mathcal{C}}
\newcommand{\calL}{\mathcal{L}}
\newcommand{\rmC}{\mathrm{C}}
\newcommand{\Implies}[2]{``\ref{#1}~$\Rightarrow$~\ref{#2}''}
\let\epsilon=\varepsilon
\let\phi=\varphi
\let\theta=\vartheta
\newcommand{\one}{\mathbbm{1}}
\newcommand{\norm}[1]{\left\lVert #1 \right\rVert}
\newcommand{\argument}{\,\cdot\,}
\newcommand{\extreme}{\operatorname{ext}} 
\title{A functional representation approach to vector lattice covers for spaces of compact operators}
\author[$\dagger$]{Onno van Gaans}
\author[$\star$]{Jochen Glück}
\author[$\ddag$]{Anke Kalauch}
\affil[$\dagger$]{Leiden University, Einsteinweg 55, 2333CC Leiden, Email: vangaans@math.leidenuniv.nl}
\affil[$\star$]{University of Wuppertal, Gaußstr.\ 20, 42119 Wuppertal, Germany. Email: glueck@uni-wuppertal.de} 
\affil[$\ddag$]{TU Dresden, 01062 Dresden, Email: Anke.Kalauch@tu-dresden.de}
\begin{document}

\maketitle

\begin{abstract}
	For ordered normed vector spaces $X, Y$, we consider the space $\calL(X,Y)$ of bounded linear operators 
	and characterize when its cone of positive operators has non-empty interior. 
	When this is satisfied, we give a functional representation 
	of the closure $\calC(X,Y)$ of the finite rank operators in $\calL(X,Y)$. 
	This space is particularly interesting since it coincides in many cases with the space of compact operators from $X$ to $Y$. 
	
	Our functional representation has very good order properties 
	in the sense that it is a so-called vector lattice cover of $\calC(X,Y)$. 
	This can be used to characterize disjointness of operators in $\calC(X,Y)$ 
	and to determine which operators have a modulus in $\calC(X,Y)$. 
	We demonstrate how our results can be applied to a variety of concrete spaces.
\end{abstract}

\textbf{Keywords:} 
Ordered vector space; ordered Banach space; pre-Riesz space; operator space; order unit; cone.
\medskip

\textbf{2020 Mathematics Subject Classification:} 46B40; 47B65

\section{Introduction}

Let $X$ and $Y$ be two ordered normed spaces  
(for unexplained terminology and notation see Section~\ref{sec:prelim}). 
From them, one obtains new ordered normed spaces by considering various classes of bounded linear operators from $X$ to $Y$. 
It is then natural to ask how properties of those operator spaces can be described in terms of $X$ and $Y$.
We prove the following results about this question.

\begin{enumerate}[label=(\roman*)]
	\item 
	We consider the space $\calL(X,Y)$ of bounded linear operator from $X$ to $Y$ 
	and characterize when its subset $\calL(X,Y)_+$ of positive operators has non-empty interior
	(Theorem~\ref{thm:interior-op-cone}).
	
	\item 
	We consider the operator norm closure of the finite rank operators within $\calL(X,Y)$, which we denote by $\calC(X,Y)$ 
	and which coincides with the space of compact operators whenever $Y$ has the approximation property. 
	Under appropriate assumptions on $X$ and $Y$, 
	we characterize all extremal vectors of the dual cone $\calC(X,Y)'_+$ of $\calC(X,Y)_+$. 
	(Theorem~\ref{thm:extremal-ops}).
	
	\item 
	If $\calL(X,Y)_+$ has non-empty interior and $X$ and $Y$ satisfy appropriate assumptions, 
	we use the aforementioned results to explicitly construct a so-called \emph{vector lattice cover} of $\calC(X,Y)$, 
	i.e., a representation of $\calC(X,Y)$ as a subspace of a vector lattice with certain order density properties 
	(Theorem~\ref{thm:vcl-ops}). 
	
	\item 
	We use the vector lattice cover from Theorem~\ref{thm:vcl-ops} 
	to study disjointness of operators in $\mathcal{C}(X,Y)$ 
	(Corollaries~\ref{cor:disjoint-operators}, \ref{cor:anti-lattice-operators}, and~\ref{cor:no-disjoint-operators}), 
	and we discuss a method to characterize those operators that have a modulus in $\mathcal{C}(X,Y)$.  
\end{enumerate}

The result in~(iii) generalizes the construction of a vector lattice cover 
for the case of $X$ and $Y$ being finite-dimensional vector spaces with generating polyhedral cones, this case was studied in \cite[Lemma~9]{SchVid1970} (see also \cite[Remark~1.7.10(iv)]{KalGaa2019}) and \cite[Section 2.6.1]{KalGaa2019}.

Various examples are discussed at the end of Section~\ref{sec:repr-op-space}.
In Section~\ref{sec:prelim}, we discuss preliminaries on ordered vectors spaces and ordered normed spaces. 
Section~\ref{sec:repr-general} contains a functional representation result 
about ordered normed spaces whose cone has non-empty interior (Theorem~\ref{thm:vcl-general}); 
we will use this result to prove Theorem~\ref{thm:vcl-ops}.

\section{Some facts on ordered normed spaces}
\label{sec:prelim}

We recall some facts from the theory of ordered vector spaces, 
mainly to fix the terminology and notation which is not always consistent throughout the literature.

Let $V$ be a real vector space. 
A \emph{wedge} in $V$ is a non-empty subset $V_+$ 
that satisfies $\alpha V_+ + \beta V_+ \subseteq V_+$ for all real scalars $\alpha, \beta \ge 0$. 
If, in addition, $V_+ \cap (-V_+) = \{0\}$, then the wedge $V_+$ is called a \emph{cone}. 
A real vector space $V$ endowed with a wedge $V_+$ is called a \emph{pre-ordered vector space}.
It is called an \emph{ordered vector space} if $V_+$ is even a cone. 
The wedge $V_+$ in a pre-ordered vector space $V$ induces a reflexive and transitive relation 
(i.e., a pre-order) $\le$ on $V$ given by 
$x \le y$ if and only if $y-x \in V_+$.
This pre-order is an order if and only if the wedge is even a cone. 
The wedge $V_+$ in a pre-ordered vector space $V$ is called \emph{generating} if its linear span $V_+-V_+$ equals $V$. 
This is equivalent to the space $V$ being \emph{directed} which means that for any two elements $v_1,v_2 \in V$ 
there exists an element of $V$ that dominates both $v_1$ and $v_2$.
An ordered vector space $V$ is called a \emph{vector lattice} or a \emph{Riesz space} 
if any two elements of $V$ have a supremum.
Standard references for the theory of Riesz spaces are, for instance, \cite{AliBur2006, MeyerNieberg1991, Schaefer1974}.
A linear map $T\colon V \to W$ between two pre-ordered vector spaces is called \emph{positive} 
if, for all $v \in V$, the inequality $v \ge 0$ implies $T v \ge 0$. In this case,  
we write $T \ge 0$. 
The map $T$ is called \emph{bipositive} if, for all $v \in V$, the inequality $v \ge 0$ is equivalent to $T v \ge 0$.

Let $V$ be a pre-ordered vector space. 
For $v_1, v_2 \in V$, the set $[v_1, v_2] := \{x \in V\colon \, v_1 \le x \le v_2\}$ 
is called the \emph{order interval} between $v_1$ and $v_2$. 
A vector $v \in V_+$ is called an \emph{extremal vector} or an \emph{extremal element} of $V_+$ 
if all elements of $[0,v]$ are scalar multiples of $v$. 

By a (pre-)ordered normed space we mean a normed space $X$ 
that is also a (pre-) ordered vector space and whose wedge $X_+$ is closed. 
Note that the order on such a space is compatible with the topology in the sense that 
if two sequences $(x_n)$ and $(y_n)$ in $X$ converge to points $x$ and $y$ in $X$, 
respectively, and $x_n \le y_n$ for all indices $n$, then also $x \le y$.
Let $X$ and $Y$ be pre-ordered normed spaces and let $\calL(X,Y)$ 
denote the space of bounded linear operators from $X$ to $Y$ endowed with the operator norm. 
Let $\calL(X,Y)_+$ denote its subset of positive operators. 
This set is a closed wedge and thus turns $\calL(X,Y)$ into a pre-ordered normed space.
If $Y_+$ is a cone and $X_+$ is \emph{total}, which means that its span is norm dense in $X$, 
then $\calL(X,Y)_+$ is also a cone.
The norm on an ordered normed space is called \emph{semi-montone} -- or the cone is called \emph{normal} -- 
if there exists a real number $C > 0$ such that for all $x_1,x_2 \in X$ 
the order inequality $0 \le x_1 \le x_2$ implies the norm inequality $\norm{x_1} \le C \norm{x_2}$.
The norm dual space $X'$ of a pre-ordered normed space $X$ 
is itself a pre-ordered normed space with respect to the \emph{dual wedge} 
$X'_+ = \{x' \in X'\colon \; \langle x', x \rangle \ge 0 \text{ for all } x \in X_+\}$, i.e., $X_+'$ consists of those continuous functionals that are positive as maps from $X$ to $\bbR$ 
(where $\bbR$ is endowed with its usual order). 
We call a functional $x' \in X'_+$ an \emph{extremal functional} if it is an extremal element of $X_+'$.

For an ordered vector space $V$,
a vector lattice $W$ together with a bipositive linear map $\Phi\colon V \to W$ is called a \emph{vector lattice cover} 
if each $w \in W$ is the infimum of all those elements of the range $\Phi[V]$ that dominate $w$.
An ordered vector space $V$ is called a \emph{pre-Riesz space} if it has a vector lattice cover. 
The theory of pre-Riesz spaces goes back to \cite{Haa1993}; 
the state of the theory as of 2019 is described in \cite{KalGaa2019}. 
For our purposes we mainly note that every ordered normed vector space with generating cone is a pre-Riesz space; 
this follows, for instance, from \cite[Proposition 2.2.3]{KalGaa2019}.

It is an intriguing feature of pre-Riesz spaces that they admit a reasonable concept of disjointness. 
Let $V$ be a pre-Riesz space. 
Two elements $v_1,v_2 \in V_+$ are called \emph{disjoint} if there infimum in $V$ exists and equals $0$. 
More generally, two elements $v_1, v_2 \in V$ are called \emph{disjoint} 
if $\{v_1+v_2, -v_1-v_2\}$ has the same upper bounds as $\{v_1-v_2,v_2-v_1\}$. 
This is consistent with the definition for the case $v_1, v_2 \in V_+$. 
If $V$ is a vector lattice, then disjointness of $v_1$ and $v_2$ is equivalent 
to the usual vector lattice version of disjointness, i.e., to the property $\lvert v_1 \rvert \land \lvert v_2 \rvert = 0$.
Moreover, if a vector lattice $W$ together with a bipositive map $\Phi\colon V \to W$ is a vector lattice cover of $V$, 
then $v_1,v_2 \in V$ are disjoint if and only if $\Phi(v_1)$ and $\Phi(v_2)$ are disjoint in $W$. 
Thus, having a concrete vector lattice cover of $V$ available often makes it easier to check 
whether two elements of $V$ are disjoint.
This will be one of the main themes throughout the article.

Let $(V,V_+)$ be an Archimedean ordered vector space and let $v$ be an order unit in $V$, i.e., 
for every $x\in V$ there is a real number $\lambda$ such that $x\in [-\lambda v,\lambda v]$. 
Let $V$ be equipped with the order unit norm 
\[
	\norm{x}_v:=\inf\big\{\lambda\in [0,\infty)\colon\ x\in [-\lambda v,\lambda v] \big\}
\] 
for $x\in V$, for details see \cite[Section 1.5.3]{KalGaa2019}. 
In this case, $V$ is called an \emph{order unit space}. 
Every order unit space is an ordered  normed vector space and hence, in particular, a pre-Riesz space. 
The functional representation of such an order unit space $V$ is given by means of the weakly-$\ast$ compact convex set 
\begin{equation}
	\label{equ:Sigma}
	\Sigma
	:=
	\{\varphi\in V'_+ \colon\  \varphi(v)=1\}
\end{equation}
and the set $\extreme \Sigma$ of the  extreme points of $\Sigma$. 
Its weak-$\ast$ closure $\overline{\extreme \Sigma}$ is a compact Hausdorff space 
with respect to the weak-$\ast$ topology. 
Consider the map 
\begin{equation}
	\Phi \colon V \to \mathrm{C}\big(\overline{\extreme \Sigma}\big),
	\quad 
	x \mapsto (\varphi\mapsto\varphi(x)),
\end{equation}
i.e., $\Phi$ is the composition of the canonical embedding from $V$ into $V''$ and the restriction to $\overline{\extreme \Sigma}$.
The map $\Phi$ is a bipositive linear map, and hence injective \cite[Section 2.5]{KalGaa2019}, 
and it is called the \emph{functional representation} of $V$. 
The functional representation of $V$ is always a vector lattice cover, 
as the following proposition shows; 
for its proof see \cite[Theorem 2.5.9]{KalGaa2019}.

\begin{proposition} 
	\label{pro:vlc}
	If $V$ is an order unit space and $\Sigma \subseteq V'_+$ is given as in~\eqref{equ:Sigma}, 
	then the functional representation $\Phi: V \to \mathrm{C}\big(\overline{\extreme \Sigma}\big)$ 
	is a vector lattice cover of $V$.
\end{proposition}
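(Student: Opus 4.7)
The plan is to verify the three ingredients that make the pair $\bigl(\mathrm{C}(\overline{\extreme\Sigma}),\Phi\bigr)$ a vector lattice cover of $V$. The first two — that $\mathrm{C}(\overline{\extreme\Sigma})$ is a vector lattice under pointwise operations, and that $\Phi$ is linear and bipositive — are immediate from the definitions together with the bipositivity recorded above from \cite[Section~2.5]{KalGaa2019}. The heart of the matter is the order-density identity
\[
	f \;=\; \inf\bigl\{\Phi(x) : x \in V,\ \Phi(x) \ge f \text{ on } \overline{\extreme\Sigma}\bigr\}
	\qquad \text{for every } f \in \mathrm{C}(\overline{\extreme\Sigma}),
\]
where the infimum is taken pointwise on $\overline{\extreme\Sigma}$. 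Since $f \le \inf$ is trivial, the task reduces to producing, for every $\varphi_0 \in \overline{\extreme\Sigma}$ and every $\epsilon > 0$, some $x \in V$ with $\Phi(x) \ge f$ on $\overline{\extreme\Sigma}$ and $\varphi_0(x) \le f(\varphi_0) + \epsilon$.

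For the construction of $x$ I would invoke Kadison's representation theorem, which identifies the order unit space $V$, via $x \mapsto \hat x$ with $\hat x(\varphi) := \varphi(x)$, with a sup-norm dense subspace of the continuous affine functions $A(\Sigma)$ on the weak-$\ast$ compact convex set $\Sigma$. By Bauer's maximum principle, sup norms on $A(\Sigma)$ coincide with sup norms on $\overline{\extreme\Sigma}$. I would extend $f$ via Tietze to $\tilde f \in \mathrm{C}(\Sigma)$ and study the upper envelope
\[
	\overline{\tilde f}(\psi) \;:=\; \inf\bigl\{g(\psi) : g \in A(\Sigma),\ g \ge \tilde f \text{ on } \Sigma\bigr\},
	\qquad \psi \in \Sigma.
\]
The Choquet-theoretic identity $\overline{\tilde f}(\psi) = \sup\bigl\{\textstyle\int \tilde f\, d\mu : \mu \text{ probability on } \Sigma,\ r(\mu) = \psi\bigr\}$ collapses for $\psi \in \extreme\Sigma$ to $\tilde f(\psi) = f(\psi)$, because the only representing measure of an extreme point is the Dirac measure. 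This yields $g \in A(\Sigma)$ with $g \ge f$ on $\overline{\extreme\Sigma}$ and $g(\varphi_0) \le f(\varphi_0) + \epsilon/2$; a norm approximation by some $\hat x \in \Phi[V]$ then produces the required $x$.

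The main obstacle is the case $\varphi_0 \in \overline{\extreme\Sigma}\setminus\extreme\Sigma$, where the Choquet computation breaks down because $\varphi_0$ can admit non-Dirac representing measures on $\Sigma$. I would approach $\varphi_0$ by a net $(\varphi_\alpha)$ in $\extreme\Sigma$, use the previous case to obtain $x_\alpha$ almost optimal at $\varphi_\alpha$, and then transfer the bound to $\varphi_0$ through a limiting argument. The difficulty is that the $x_\alpha$ need not be norm-bounded and affineness alone does not furnish enough equicontinuity to convert $\varphi_\alpha(x_\alpha) \le f(\varphi_\alpha) + o(1)$ into $\varphi_0(x_\alpha) \le f(\varphi_0) + o(1)$; a careful choice of the extension $\tilde f$ near $\varphi_0$ and of the dominators $g_\alpha$ is required to secure a weak-$\ast$ accumulation point with the desired properties. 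The detailed execution of this last step is carried out in \cite[Theorem~2.5.9]{KalGaa2019}.
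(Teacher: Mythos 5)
Your proposal is not yet a proof: the one step you cannot execute --- the approximation property at points $\varphi_0\in\overline{\extreme\Sigma}\setminus\extreme\Sigma$ --- is precisely the step you delegate to \cite[Theorem~2.5.9]{KalGaa2019}, which is the very reference the paper quotes for the whole proposition, so as it stands the argument is incomplete (indeed circular). And the obstacle you identify there is real: from $\varphi_\alpha(x_\alpha)\le f(\varphi_\alpha)+o(1)$ along a net of extreme points $\varphi_\alpha\to\varphi_0$ one gets no control on $\varphi_0(x_\alpha)$, since the dominators $x_\alpha$ need not be bounded and the functions $\Phi(x_\alpha)$ need not be equicontinuous; the limiting argument you sketch does not close.

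The missing idea is that this step is unnecessary. The infimum in the definition of a vector lattice cover is the greatest lower bound in the lattice $\mathrm{C}(\overline{\extreme\Sigma})$, not a pointwise infimum that has to be attained up to $\epsilon$ at every single point. Your Choquet argument at genuine extreme points (Tietze extension, upper envelope, uniqueness of the representing measure at an extreme point, sup-norm density of $\Phi[V]$ in the continuous affine functions on $\Sigma$, then adding a small multiple of the order unit to restore domination after the norm approximation) does yield: for every $\varphi_0\in\extreme\Sigma$ and every $\epsilon>0$ there is $x\in V$ with $\Phi(x)\ge f$ on $\overline{\extreme\Sigma}$ and $\varphi_0(x)\le f(\varphi_0)+\epsilon$. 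Now let $g\in\mathrm{C}(\overline{\extreme\Sigma})$ be any lower bound of the set of dominators $\{\Phi(x)\colon \Phi(x)\ge f\}$, which is nonempty because large multiples of the order unit dominate $f$. The statement above gives $g(\varphi_0)\le f(\varphi_0)+\epsilon$ for all $\varphi_0\in\extreme\Sigma$ and all $\epsilon>0$, hence $g\le f$ on $\extreme\Sigma$; since $\extreme\Sigma$ is dense in $\overline{\extreme\Sigma}$ and both functions are continuous, $g\le f$ everywhere. Thus $f$ is the infimum of its dominators in $\mathrm{C}(\overline{\extreme\Sigma})$, and no information at the non-extreme points of $\overline{\extreme\Sigma}$ is required. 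With this density-plus-continuity observation your outline becomes a complete, self-contained Choquet-theoretic proof, in contrast to the paper, which simply refers to \cite[Theorem~2.5.9]{KalGaa2019}.
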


We will generalize this proposition in Theorem~\ref{thm:vcl-general} below. 
To this end we need the following result which can, for instance, 
be found in \cite[Propositions~1.5.11 and~1.5.17]{KalGaa2019}.

\begin{proposition}
	\label{pro:innerpoint}
	Let $V \not= \{0\}$ be an ordered normed space whose cone has an interior point $v$. 
	Then $v$ is an order unit and the order unit norm $\norm{\argument}_v$ 
	is weaker than the original norm $\norm{\argument}$ on $V$. 
	Moreover, every positive functional on $V$ is continuous with respect to both norms.
\end{proposition}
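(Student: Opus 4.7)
The proof splits into three claims, and my plan is to handle them in the order stated, because each claim feeds directly into the next.

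First, to show that $v$ is an order unit, I would use the fact that since $v$ lies in the topological interior of $V_+$, there exists some $r>0$ such that the open ball $\{z\in V\colon \norm{z-v}<r\}$ is contained in $V_+$. Thus, for any $y\in V$ with $\norm{y}<r$, both $v+y$ and $v-y$ are in $V_+$, which translates directly to $-v\le y\le v$. For arbitrary $x\in V$, scaling by any $\lambda>\norm{x}/r$ gives $\norm{x/\lambda}<r$, hence $x\in[-\lambda v,\lambda v]$. This proves $v$ is an order unit.

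Second, the same scaling argument immediately yields the norm comparison. From $x\in[-\lambda v,\lambda v]$ whenever $\lambda>\norm{x}/r$ I get $\norm{x}_v\le\lambda$ for every such $\lambda$, and taking the infimum produces the explicit estimate $\norm{x}_v\le\tfrac{1}{r}\norm{x}$. So $\norm{\argument}_v$ is dominated by a constant multiple of $\norm{\argument}$, which is the content of the statement that $\norm{\argument}_v$ is weaker.

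Third, for the continuity of a positive functional $\varphi\colon V\to\bbR$, I would first argue that it suffices to prove continuity with respect to the (weaker) order unit norm $\norm{\argument}_v$, since continuity in the weaker norm automatically implies continuity in the stronger norm $\norm{\argument}$. Given $x\in V$ and $\lambda>\norm{x}_v$, we have $-\lambda v\le x\le\lambda v$; applying the positive functional $\varphi$ gives $-\lambda\varphi(v)\le\varphi(x)\le\lambda\varphi(v)$, and letting $\lambda\searrow\norm{x}_v$ yields $\lvert\varphi(x)\rvert\le\varphi(v)\norm{x}_v$. This is the desired bound.

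No step here is really an obstacle; everything is a direct unpacking of the definition of interior point, order unit, order unit norm, and positivity. If I had to flag something, it would be the small bookkeeping in step one of passing from the open-ball inclusion to the two-sided order inequality $-v\le y\le v$, since one has to use the interior-point condition for both the translates $v+y$ and $v-y$ simultaneously; but this is routine.
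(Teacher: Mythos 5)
Your proof is correct and is essentially the standard argument: the paper itself does not prove this proposition but cites \cite[Propositions~1.5.11 and~1.5.17]{KalGaa2019}, whose proofs proceed exactly as you do, deducing $x\in[-\lambda v,\lambda v]$ from the ball $B(v,r)\subseteq V_+$, obtaining $\norm{x}_v\le\tfrac{1}{r}\norm{x}$, and then the bound $\lvert\varphi(x)\rvert\le\varphi(v)\norm{x}_v$ for positive $\varphi$. The only implicit point worth noting is that $\norm{\argument}_v$ is genuinely a norm because the closed cone makes $V$ Archimedean, but that is part of the setup rather than of the claims you had to verify.
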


Note that the two norms in Proposition \ref{pro:innerpoint} need not be equivalent, see, e.g., \cite[Example 1.5.12]{KalGaa2019}.
For a detailed discussion in ordered Banach spaces, see \cite[Proposition~2.11]{GlueckWeber2020}.

For an ordered vector space $(V,V_+)$, a set $S'$ of positive linear functionals from $V$ to $\mathbb{R}$ 
is said to \emph{determine positivity} if, for every $v\in V$, 
one has $v\geq 0$ if $\varphi(v)\geq 0$ for all $\varphi\in S'$. 
Note that, if $V$ is an order unit space and $\Sigma \subseteq V'_+$ is given as in~\eqref{equ:Sigma}, 
then $\extreme \Sigma$ determines positivity.

We will make use of the following two simple propositions.

\begin{proposition} 
	\label{prop:zeus}
	Let $X$ be an ordered normed vector space and let $S' \subseteq X'_+$. 
	The following are equivalent: 
	\begin{enumerate}[label=\upshape(\roman*)]
		\item\label{prop:zeus:itm:det-pos} 
		The set $S'$ \emph{determines positivity}, i.e., 
		for every $x \in X$, the inequality $\langle x', x \rangle \ge 0$ for all $x ' \in S'$ 
		implies that $x \ge 0$.
		
		\item\label{prop:zeus:itm:dense} 
		The positive-linear hull of $S'$ is weak-$\ast$ dense in the dual wedge $X'_+$.
	\end{enumerate}
\end{proposition}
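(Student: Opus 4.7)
The plan is to prove \Implies{prop:zeus:itm:dense}{prop:zeus:itm:det-pos} directly from weak-$\ast$ continuity of evaluation combined with the closedness of $X_+$, and to derive the converse \Implies{prop:zeus:itm:det-pos}{prop:zeus:itm:dense} via a weak-$\ast$ Hahn--Banach separation argument in $X'$.

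For \Implies{prop:zeus:itm:dense}{prop:zeus:itm:det-pos}, let $x\in X$ satisfy $\langle x',x\rangle \ge 0$ for every $x'\in S'$. Nonnegative linear combinations preserve this inequality, so it extends to the positive-linear hull $\operatorname{pos}(S')$ of $S'$. Since $x$ induces a weak-$\ast$ continuous functional on $X'$, the inequality passes to the weak-$\ast$ closure $\overline{\operatorname{pos}(S')}^{w^*}$, which by assumption equals $X'_+$. Hence $\langle x',x\rangle \ge 0$ for every $x'\in X'_+$; since $X_+$ is a closed convex cone, a standard application of the Hahn--Banach separation theorem (i.e., the bipolar identity $X_+ = \{x\in X\colon \langle x',x\rangle \ge 0 \text{ for all } x'\in X'_+\}$) forces $x\in X_+$.

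For the converse, I would argue by contradiction: suppose $S'$ determines positivity but the weak-$\ast$ closed wedge $C := \overline{\operatorname{pos}(S')}^{w^*}$ is a proper subset of $X'_+$, and pick some $x'_0\in X'_+\setminus C$. Applying the Hahn--Banach separation theorem in the locally convex space $(X',w^*)$ to the compact singleton $\{x'_0\}$ and the closed convex set $C$ yields some $x\in X$ (representing a weak-$\ast$ continuous functional on $X'$) and some $\alpha\in\bbR$ with
\[
	\langle x'_0,x\rangle < \alpha \le \inf_{y'\in C}\langle y',x\rangle.
\]
Because $C$ is a wedge containing $0$, a scaling argument (if $\langle y',x\rangle<0$ for some $y'\in C$, then $\langle ty',x\rangle\to-\infty$ as $t\to\infty$, contradicting the lower bound $\alpha$) upgrades this to $\langle y',x\rangle\ge 0$ for all $y'\in C$, and in particular for all $y'\in S'$. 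By hypothesis \ref{prop:zeus:itm:det-pos}, we obtain $x\in X_+$, whence $\langle x'_0,x\rangle \ge 0$ because $x'_0\in X'_+$. This contradicts $\langle x'_0,x\rangle < \alpha\le 0$.

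The main point to get right is the choice of topology when invoking Hahn--Banach: working in $(X',w^*)$ rather than in the norm topology ensures that the separating functional has the form $y'\mapsto \langle y',x\rangle$ for some $x\in X$, which is precisely what feeds into hypothesis \ref{prop:zeus:itm:det-pos}. Once that is arranged, the wedge structure of $C$ (rather than mere convexity) is what converts strict separation into the nonnegativity statement on all of $C$, and everything else is routine.
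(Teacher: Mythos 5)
Your proposal is correct and follows essentially the same route as the paper: the easy direction uses that the inequality passes from $S'$ to its positive-linear hull, its weak-$\ast$ closure, and then all of $X'_+$, combined with the closedness of $X_+$; the hard direction is the same weak-$\ast$ Hahn--Banach separation in $(X',w^*)$, using that the separating functional is a point evaluation, that $0$ lies in the closed wedge to force the separating constant $\le 0$, and the wedge structure to get nonnegativity on all of $C$. The only cosmetic difference is that you argue by contradiction via hypothesis \ref{prop:zeus:itm:det-pos}, while the paper states the same argument as a contraposition.
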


\begin{proof}
	\begin{description}
		\item[\normalfont\Implies{prop:zeus:itm:det-pos}{prop:zeus:itm:dense}] 
		We show the contraposition.
		Let $C' \subseteq X'_+$ denote the weak-$\ast$ closure of the positive-linear hull of $S'$ 
		and assume that there exists a point $x'_0 \in X'_+$ that is not in $C'$. 
		By the Hahn--Banach separation theorem in the locally convex space $X'$ with the weak-$\ast$ topology, 
		there exists a weak-$\ast$ continuous functional $x'' \colon X' \to \bbR$ and a number $\gamma \in \bbR$ 
		such that $\langle x'', x' \rangle \ge \gamma > \langle x'', x'_0 \rangle$ for all $x' \in C'$. 
		Due to the weak-$\ast$ continuity of $x''$, there exists an element $x \in X$ such that $x''$ is the point evaluation at $x$, see  
		\cite[§~IV.1.2 on pp.\,124--125]{SchaeferWolff1999}. It follows that $\langle x, x' \rangle \ge \gamma > \langle x, x'_0 \rangle$ 
		for all $x' \in C'$. 
		Since $0\in C'$,  we obtain $\gamma\leq 0$. 
		Therefore,
		$x$ is not in $X_+$.
		As $C'$ is a wedge,
		  for every $x'\in C'$ we have $\langle x, x' \rangle\geq 0$, hence $C'$ does not determine positivity.
		
		\item[\normalfont\Implies{prop:zeus:itm:dense}{prop:zeus:itm:det-pos}]
		If $x \in X$ satisfies $\langle x', x \rangle \ge 0$ for all $x' \in S'$, 
		then it follows from~\ref{prop:zeus:itm:dense} that the same inequality holds for all $x' \in X'_+$. 
		As $X_+$ is closed, this implies that $x \in X_+$. 
		\qedhere
	\end{description}
\end{proof}

The following is a primal version of Proposition~\ref{prop:zeus}. 
As its proof is similar, we omit it. 

\begin{proposition} 
	\label{prop:hera}
	Let $X$ be an ordered normed vector space and let $S \subseteq X_+$. 
	The following are equivalent: 
	\begin{enumerate}[label=\upshape(\roman*)]
		\item\label{prop:hera:itm:det-pos} 
		The set $S$ determines positivity of functionals in the sense that, 
		for each $x' \in X'$, the inequality $\langle x', x \rangle \ge 0$ for all $x \in S$ 
		implies that $x' \ge 0$.
		
		\item\label{prop:hera:itm:dense} 
		The positive-linear hull of $S$ is weakly dense (equivalently, norm dense) in $X_+$.
	\end{enumerate}
\end{proposition}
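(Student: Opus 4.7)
The plan is to mirror the proof of Proposition~\ref{prop:zeus}, carrying out the Hahn--Banach separation in the primal space $X$ rather than in the dual. Since the positive-linear hull of $S$ is convex (it is even a wedge), Mazur's theorem immediately reconciles weak density with norm density, so the parenthetical equivalence in~\ref{prop:hera:itm:dense} needs no separate argument and I will work with norm density throughout.

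For \Implies{prop:hera:itm:det-pos}{prop:hera:itm:dense} I would argue by contraposition. Let $C$ denote the norm closure of the positive-linear hull of $S$; this is a closed convex wedge contained in $X_+$. Assuming some $x_0 \in X_+$ lies outside $C$, I would apply the Hahn--Banach separation theorem in the normed space $X$ to obtain a functional $x' \in X'$ and a scalar $\gamma \in \bbR$ with $\langle x', x \rangle \ge \gamma > \langle x', x_0 \rangle$ for every $x \in C$. Since $0 \in C$, necessarily $\gamma \le 0$, so $\langle x', x_0 \rangle < 0$ while $x_0 \in X_+$; in particular $x' \notin X'_+$. On the other hand, since $C$ is a wedge, $\langle x', \cdot \rangle$ is nonnegative on $C$, hence on $S$, contradicting~\ref{prop:hera:itm:det-pos}.

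For the reverse direction \Implies{prop:hera:itm:dense}{prop:hera:itm:det-pos}, if $x' \in X'$ satisfies $\langle x', x \rangle \ge 0$ for all $x \in S$, linearity extends the inequality to the positive-linear hull of $S$, and continuity of $x'$ combined with norm density of this hull in $X_+$ extends it further to all of $X_+$. Hence $x' \in X'_+$.

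The only structural difference from the proof of Proposition~\ref{prop:zeus} is cosmetic: because the separation is performed directly in $X$, the separating functional automatically belongs to $X'$, and one bypasses the step invoking \cite[§~IV.1.2]{SchaeferWolff1999} that was used to identify weak-$\ast$ continuous functionals on $X'$ with points of $X$. I do not foresee a substantive obstacle.
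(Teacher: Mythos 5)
Your proposal is correct and is essentially the argument the paper has in mind: the paper omits the proof of Proposition~\ref{prop:hera} precisely because it is the primal mirror of Proposition~\ref{prop:zeus}, and your Hahn--Banach separation in $X$ (with Mazur's theorem settling the weak-versus-norm density remark and the wedge property of the hull forcing $\gamma \le 0$ and nonnegativity of $x'$ on $S$) is exactly that adaptation.
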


If the cone $V_+$ in an ordered vector space $X$ has a base $D$, then $x\in D$ is extremal if and only if $x$ is an extreme point of $D$, 
see \cite[Lemma 1.5.19]{KalGaa2019}. 
The following result by Krein and Milman is given, e.g., in 
\cite[§II.10.5 on p.\,68]{SchaeferWolff1999}.

\begin{proposition}
	\label{pro:dirk}
	Let $V$ be a locally convex Hausdorff space. 
	Let $\Sigma\subseteq V$ be compact and convex and let $S'\subseteq \Sigma$. 
	If $\Sigma=\overline{\mathrm{conv}(S')}$ then $\extreme \Sigma \subseteq \overline{S'}$.
\end{proposition}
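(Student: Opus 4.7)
The plan is to establish this result (essentially Milman's partial converse to the Krein--Milman theorem) by contradiction, using a classical covering argument. Fix $e\in\extreme\Sigma$ and suppose $e\notin\overline{S'}$. Exploiting that $V$ is locally convex Hausdorff and $\overline{S'}$ is closed, I would first choose an open, convex, balanced neighborhood $U$ of $0$ such that $(e+\overline{U})\cap\overline{S'}=\emptyset$; this is possible because every neighborhood of $0$ in a topological vector space contains a closed balanced neighborhood, and the locally convex assumption supplies convexity.

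Next, since $\overline{S'}$ is a closed subset of the compact set $\Sigma$, it is compact, so the open cover $\{x+U : x\in \overline{S'}\}$ admits a finite subcover $x_1+U,\ldots,x_n+U$ with $x_i\in\overline{S'}$. I would then set $K_i:=\overline{\mathrm{conv}}(\overline{S'}\cap(x_i+U))$. Each $K_i$ is compact (as a closed subset of $\Sigma$) and convex, and by convexity of $x_i+\overline{U}$ one has $K_i\subseteq x_i+\overline{U}$. The crucial step is the identity
\[
	\Sigma \;=\; \mathrm{conv}(K_1\cup\cdots\cup K_n),
\]
for which I would invoke the standard fact that the convex hull of the union of finitely many compact convex subsets of a Hausdorff topological vector space is itself compact, being the continuous image of the product of the simplex $\Delta^{n-1}$ and $K_1\times\cdots\times K_n$ under $(\lambda,y_1,\ldots,y_n)\mapsto \sum\lambda_i y_i$. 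Because this compact convex set contains $\overline{S'}\subseteq\bigcup_i K_i$ and lies inside $\Sigma$, taking closed convex hulls yields the claimed equality.

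From the decomposition, $e$ admits a representation $e=\sum_{i=1}^n \lambda_i y_i$ with $\lambda_i\ge 0$, $\sum\lambda_i=1$, and $y_i\in K_i\subseteq\Sigma$. Extremality of $e$ in $\Sigma$ forces (by a straightforward induction on the number of nonzero $\lambda_i$) that $y_i=e$ whenever $\lambda_i>0$, so $e\in K_{i_0}\subseteq x_{i_0}+\overline{U}$ for some index $i_0$. Since $U$ is balanced, this gives $x_{i_0}\in e+\overline{U}$, which contradicts $(e+\overline{U})\cap\overline{S'}=\emptyset$ because $x_{i_0}\in\overline{S'}$. The main obstacle is the compactness (hence closedness) of $\mathrm{conv}(K_1\cup\cdots\cup K_n)$; once this general topological-vector-space fact is available, the remainder is a direct application of extremality.
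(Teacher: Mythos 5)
Your argument is correct: it is the classical covering proof of Milman's partial converse to the Krein--Milman theorem, and all the steps (choice of the closed convex balanced neighbourhood, compactness of $\mathrm{conv}(K_1\cup\dots\cup K_n)$, and the extremality argument) are sound. The paper does not prove this proposition at all but simply cites \cite[\S II.10.5]{SchaeferWolff1999}, and your proof is essentially the standard argument given there, so no further comparison is needed.
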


\section{A general functional representation result}
\label{sec:repr-general}

In the following theorem we give a quite explicit construction of a vector lattice cover 
of an ordered normed space whose cone has an interior point. 
This generalizes the functional representation of order unit spaces given in Proposition \ref{pro:vlc} in two respects. 
First, we allow norms that are not order unit norms and, in fact, need not even be semi-monotone. 
Second, we do not need to know all extremal vectors of the dual cone, 
but only a subset of them that is sufficiently large to determine positivity.

\begin{theorem}
	\label{thm:vcl-general}
	Let $Z \not= \{0\}$ be an ordered normed space whose cone $Z_+$ has an interior point $z_0$
	and let $S' \subseteq Z'$ be a subset with the following properties:
	\begin{enumerate}[label=\upshape(\arabic*)]
		\item\label{thm:vcl-general:itm:base} 
		One has $\langle s', z_0 \rangle = 1$ for all $s' \in S'$.
		
		\item\label{thm:vcl-general:itm:extremal} 
		Every element of $S'$ is an extremal vector of $Z'_+$.
		
		\item\label{thm:vcl-general:itm:determine} 
		The set $S'$ determines positivity.
	\end{enumerate}
	Endow the weak-$\ast$ closure $\overline{S'}$ with the weak-$\ast$ topology and consider the composition
	\begin{align*}
		\Phi: \, Z \to Z'' \to \rmC( \overline{S'} )
	\end{align*}
	of the canonical embedding and the restriction to $\overline{S'}$. 
	Then the space $\rmC(\overline{S'})$ together with the map $\Phi$ is a vector lattice cover of $Z$. 
\end{theorem}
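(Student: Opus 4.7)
The plan is to reduce the claim to the order-unit-space version of the functional representation (Proposition~\ref{pro:vlc}) by identifying $\overline{S'}$ with $\overline{\extreme \Sigma}$, where $\Sigma = \{\varphi \in Z'_+ : \langle \varphi, z_0 \rangle = 1\}$ is the set from~\eqref{equ:Sigma}. Proposition~\ref{pro:innerpoint} immediately gives that $z_0$ is an order unit and that every positive functional on $Z$ is continuous both for the original norm and for the order unit norm $\norm{\argument}_{z_0}$. Consequently, $Z'_+$ (and hence $\Sigma$, together with the weak-$\ast$ topology on it, which is just pointwise convergence on $Z$) is the same object regardless of which of the two norms is placed on $Z$.

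I would next verify that $\Sigma$ is a base of $Z'_+$: any nonzero $\varphi \in Z'_+$ must evaluate strictly positively at the order unit $z_0$, so it has a unique positive scalar multiple in $\Sigma$. By the remark preceding Proposition~\ref{pro:dirk} (extremal vectors that lie on a base are exactly the extreme points of that base), assumptions~\ref{thm:vcl-general:itm:base} and~\ref{thm:vcl-general:itm:extremal} give $S' \subseteq \extreme \Sigma$, and therefore $\overline{S'} \subseteq \overline{\extreme \Sigma}$.

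For the reverse inclusion, assumption~\ref{thm:vcl-general:itm:determine} combined with Proposition~\ref{prop:zeus} says that the positive-linear hull of $S'$ is weak-$\ast$ dense in $Z'_+$. Given $\sigma \in \Sigma$ and a net $\sigma_\alpha = \sum_i \lambda_i^\alpha s_i^\alpha$ in this hull with $\sigma_\alpha \to \sigma$, assumption~\ref{thm:vcl-general:itm:base} yields $\sum_i \lambda_i^\alpha = \langle \sigma_\alpha, z_0 \rangle \to 1$, and renormalizing produces convex combinations of elements of $S'$ converging to $\sigma$. Hence $\Sigma = \overline{\mathrm{conv}(S')}$, and Proposition~\ref{pro:dirk} then gives $\extreme \Sigma \subseteq \overline{S'}$. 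Combining both inclusions, $\overline{S'} = \overline{\extreme \Sigma}$.

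Finally, equipping $Z$ with the order unit norm $\norm{\argument}_{z_0}$ turns it into an order unit space, and since $\Phi$ depends only on the pairing of $Z$ with $Z'$, the map $\Phi : Z \to \rmC(\overline{S'}) = \rmC(\overline{\extreme \Sigma})$ of the theorem is literally the functional representation of Proposition~\ref{pro:vlc}, which then provides the vector lattice cover conclusion. The main subtlety I expect is the careful book-keeping needed to verify that all the relevant dual objects — the positive cone $Z'_+$, the base $\Sigma$, its extreme points, and the weak-$\ast$ topology on them — are unaffected by switching from the original norm on $Z$ to the order unit norm; Proposition~\ref{pro:innerpoint} is exactly the tool for this.
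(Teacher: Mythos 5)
Your proposal is correct and takes essentially the same route as the paper: reduce to Proposition~\ref{pro:vlc} via Proposition~\ref{pro:innerpoint}, show $S' \subseteq \extreme \Sigma$ from assumptions~\ref{thm:vcl-general:itm:base} and~\ref{thm:vcl-general:itm:extremal}, and obtain $\extreme \Sigma \subseteq \overline{S'}$ from Propositions~\ref{prop:zeus} and~\ref{pro:dirk}, giving $\overline{S'} = \overline{\extreme \Sigma}$. Your renormalization argument simply makes explicit the step, left implicit in the paper, from weak-$\ast$ density of the positive-linear hull of $S'$ in $Z'_+$ to weak-$\ast$ density of $\mathrm{conv}(S')$ in $\Sigma$.
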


\begin{proof}		
	By Proposition~\ref{pro:innerpoint} the vector $z_0$ is an order unit of $Z$ 
	and the set $Z^\sim_+$ of all positive functionals on $Z$ 
	coincides with $Z'_+$ as well as with the cone in the dual space of $(Z, \norm{\argument}_{z_0})$.
	Note that the weak-$\ast$ topology on $Z^\sim_+$ does not depend on the norm chosen on $Z$. 
	In the remainder of the proof, we endow $Z^\sim_+$ with the weak-$\ast$ topology induced by $Z$. 
	Proposition~\ref{pro:vlc} thus gives us a vector lattice cover 
	$\Phi: Z \to \mathrm{C}(\overline{\extreme \Sigma})$ of $Z$,
	where $\Sigma := \{\varphi\in Z^\sim_+ \colon\  \varphi(z_0)=1\}$. 
	So it suffices to show that $\overline{\extreme \Sigma}= \overline{S'}$.
	
	By assumptions~\ref{thm:vcl-general:itm:base} and~\ref{thm:vcl-general:itm:extremal} 
	we have $S'\subseteq \extreme \Sigma$ and thus, $\overline{S'} \subseteq \overline{\extreme \Sigma}$.
	On the other hand, as $S'$ determines positivity by assumption~\ref{thm:vcl-general:itm:determine}, 
	the positive-linear span of $S'$ is weak-$\ast$ dense in $Z'_+$, according to Proposition~\ref{prop:zeus}. 
	Therefore, the convex hull of $S'$ is weak-$\ast$ dense in $\Sigma$. 
	According to Proposition~\ref{pro:dirk} this implies $\extreme \Sigma \subseteq \overline{S'}$.
	Consequently, $\overline{\extreme \Sigma}= \overline{S'}$.
\end{proof}

For the sake of easier reference, let us state the following special case of Theorem~\ref{thm:vcl-general} explicitly. 
The corollary is close to \cite[Theorem~2.5.9]{KalGaa2019}, 
the only difference being that we allow the norm on $Z_+$ to be more general, 
so that we include, in particular, the case where the cone $Z_+$ is not normal.

\begin{corollary}
	Let $Z$ be an ordered normed space whose cone $Z_+$ has an interior point $z_0$
	and let 
	\begin{align*}
		S' := \{z' \in Z'_+ \colon \, \langle z',z_0 \rangle = 1 \text{ and } z' \text{ is extremal in } Z'_+ \}.
	\end{align*}
	Endow the weak-$\ast$ closure $\overline{S'}$ with the weak-$\ast$ topology and consider the composition
	\begin{align*}
		\Phi: \, Z \to Z'' \to \rmC( \overline{S'} ),
	\end{align*}
	of the canonical embedding and the restriction to $\overline{S'}$. 
	Then the space $\rmC(\overline{S'})$ together with the map $\Phi$ is a vector lattice cover of $Z$. 
\end{corollary}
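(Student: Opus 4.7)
The plan is to invoke Theorem~\ref{thm:vcl-general} directly with this choice of $S'$, so the task reduces to verifying its three hypotheses. Conditions~\ref{thm:vcl-general:itm:base} and~\ref{thm:vcl-general:itm:extremal} are built into the very definition of $S'$, so the substance of the proof is to check that $S'$ determines positivity.

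To do this, I would introduce the normalization set
\[
	\Sigma := \{z' \in Z'_+ \colon \langle z', z_0 \rangle = 1\}.
\]
By Proposition~\ref{pro:innerpoint}, $z_0$ is an order unit of $Z$ and every positive functional on $Z$ is continuous with respect to the order unit norm $\norm{\argument}_{z_0}$. Hence $\Sigma$ coincides with the set of norm-one positive functionals on the order unit space $(Z, \norm{\argument}_{z_0})$; in particular it sits inside the dual unit ball of that space and is weak-$\ast$ closed, so it is weak-$\ast$ compact and convex. Since $\langle z', z_0 \rangle > 0$ for every nonzero $z' \in Z'_+$ (as $z_0$ is an interior point of $Z_+$), the set $\Sigma$ is a base of the cone $Z'_+$, and then the lemma cited just before Proposition~\ref{pro:dirk} identifies the extreme points of $\Sigma$ with exactly the elements of $S'$.

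Next I would apply the Krein--Milman theorem to conclude that $\Sigma = \overline{\mathrm{conv}}(S')$ in the weak-$\ast$ topology. Because every nonzero $z' \in Z'_+$ is a nonnegative scalar multiple of some element of $\Sigma$, the positive-linear hull of $S'$ is therefore weak-$\ast$ dense in $Z'_+$. Proposition~\ref{prop:zeus} then yields that $S'$ determines positivity, so assumption~\ref{thm:vcl-general:itm:determine} of Theorem~\ref{thm:vcl-general} is satisfied as well, and the corollary follows at once.

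The only mildly delicate step is the weak-$\ast$ compactness of $\Sigma$: since the corollary does not assume $Z_+$ to be normal, $\Sigma$ need not be norm-bounded in the original dual, so it is genuinely essential to first invoke Proposition~\ref{pro:innerpoint} to pass to the order unit norm before applying Banach--Alaoglu.
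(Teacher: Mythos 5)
Your proposal is correct and takes essentially the same route as the paper: the paper, too, deduces the corollary from Theorem~\ref{thm:vcl-general}, observing that assumptions~\ref{thm:vcl-general:itm:base} and~\ref{thm:vcl-general:itm:extremal} are immediate from the definition of $S'$ and that assumption~\ref{thm:vcl-general:itm:determine} follows from the Krein--Milman theorem together with Proposition~\ref{prop:zeus}, which is exactly the argument you spell out in detail. Only your closing remark is slightly off: since Proposition~\ref{pro:innerpoint} says the order unit norm is weaker than the original norm, the functionals in $\Sigma$ are automatically $\norm{\argument}$-continuous with uniformly bounded dual norms, so $\Sigma$ is in fact norm-bounded in $Z'$ and Banach--Alaoglu could be applied there directly; this does not affect the validity of your proof.
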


\begin{proof}
	The set $S'$ obviously satisfies the assumptions~\ref{thm:vcl-general:itm:base} and~\ref{thm:vcl-general:itm:extremal} 
	of Theorem~\ref{thm:vcl-general}. 
	Due to the Krein--Milman theorem and Proposition~\ref{prop:zeus}, 
	it also satisfies assumption~\ref{thm:vcl-general:itm:determine}.
\end{proof}

Let us point out again that, in Theorem~\ref{thm:vcl-general}, 
the set $S'$ does not need to consist of all extremal vectors $s'$ of $Z'_+$ 
that satisfy $\langle s', z_0 \rangle=1$, but only of a subset of them that is sufficiently large to determine positivity. 
Recall that one can check disjointness of two elements of a pre-Riesz space by checking their disjointness in a vector lattice cover.  
Theorem~\ref{thm:vcl-general} provides a vector lattice cover consisting of continuous functions on $\overline{S'}$. 
By observing that two continuous functions are disjoint whenever they are disjoint on a dense subset of $\overline{S'}$ 
-- for instance on $S'$ itself -- one thus gets the following corollary.

\begin{corollary}
	\label{cor:disjoint-determine-pos}
	Let $Z$ be an ordered normed space whose cone $Z_+$ has non-empty interior 
	and let $S' \subseteq Z'_+$ be a set of extremal vectors of $Z'_+$ that determines positivity 
	(for instance, the set of all extremal vectors of $Z'_+)$.  
	Two elements $z_1, z_2 \in Z$ are disjoint if and only if, for every $s' \in S'$, 
	one has $\langle s', z_1 \rangle = 0$ or $\langle s', z_2 \rangle = 0$.
\end{corollary}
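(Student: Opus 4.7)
The plan is to reduce the disjointness question in $Z$ to disjointness of continuous functions on a suitable compact set via the vector lattice cover constructed in Theorem~\ref{thm:vcl-general}. To invoke that theorem, however, $S'$ must first be rescaled to meet condition~\ref{thm:vcl-general:itm:base}.

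First, I would normalise. Since $z_0$ lies in the interior of $Z_+$, every nonzero $s' \in Z'_+$ satisfies $\langle s', z_0\rangle > 0$: otherwise $s'$ would be nonnegative on a whole norm ball around $z_0$ and hence zero. Set $\tilde S' := \{s'/\langle s', z_0\rangle : s' \in S',\ s' \neq 0\}$. Each $\tilde s' \in \tilde S'$ satisfies $\langle \tilde s', z_0\rangle = 1$; each is still extremal in $Z'_+$, because extremality is preserved under multiplication by positive scalars; and $\tilde S'$ still determines positivity, since $\langle s', x\rangle \geq 0$ is equivalent to $\langle s'/\langle s', z_0\rangle, x\rangle \geq 0$ and discarding $s' = 0$ removes only a trivially satisfied constraint. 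Theorem~\ref{thm:vcl-general} therefore yields a vector lattice cover $\Phi\colon Z \to \rmC(\overline{\tilde S'})$.

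Next, recall from Section~\ref{sec:prelim} that $z_1, z_2 \in Z$ are disjoint in the pre-Riesz space $Z$ if and only if $\Phi(z_1)$ and $\Phi(z_2)$ are disjoint in the vector lattice cover, which means $|\Phi(z_1)|\land|\Phi(z_2)| = 0$ in $\rmC(\overline{\tilde S'})$. For continuous real-valued functions on a compact Hausdorff space this identity holds precisely when, at every point $\tilde s' \in \overline{\tilde S'}$, one of $\Phi(z_1)(\tilde s') = \langle \tilde s', z_1\rangle$ or $\Phi(z_2)(\tilde s') = \langle \tilde s', z_2\rangle$ vanishes.

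Finally, the set of points where both functions are nonzero is open in $\overline{\tilde S'}$, so the pointwise vanishing condition holds on all of $\overline{\tilde S'}$ if and only if it holds on the dense subset $\tilde S'$. Since $\langle s', z_i\rangle = 0$ is equivalent to $\langle s'/\langle s', z_0\rangle, z_i\rangle = 0$, and the case $s' = 0$ is trivially covered by both sides of the claimed equivalence, the condition on $\tilde S'$ matches the statement for $S'$. The only step demanding real care is the normalisation---verifying that rescaling preserves both extremality and the property of determining positivity---while the passage from dense pointwise disjointness to disjointness on the closure is the routine observation that the bad set is open.
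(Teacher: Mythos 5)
Your proposal is correct and follows essentially the same route as the paper: remove $0$, rescale $S'$ so that each functional takes the value $1$ at an interior point $z_0$, apply Theorem~\ref{thm:vcl-general}, transfer disjointness to the vector lattice cover $\rmC(\overline{S'})$, and use that pointwise disjointness of continuous functions on the dense subset extends to the closure. The only difference is that you spell out the routine verifications (positivity of $\langle s', z_0\rangle$, preservation of extremality and of determining positivity under rescaling) that the paper leaves implicit.
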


\begin{proof}
	Remove $0$ from the set $S'$, if necessary, 
	and replace every element $s' \in S'$ with $\frac{s'}{\langle s', z_0 \rangle}$, 
	where $z_0$ is a fixed interior point of $Z_+$. 
	Then one can apply Theorem~\ref{thm:vcl-general}.
\end{proof}

\section{When does the cone of positive operators have non-empty interior?}

We intend to apply Theorem \ref{thm:vcl-general} to spaces of operators (Section~\ref{sec:repr-op-space}). 
To this end we now characterize when $\calL(X,Y)_+$ contains an interior point. 
For vector spaces $X$ and $Y$, for a vector $y\in Y$ and a linear functional $x'$ on $X$, 
we let $y\otimes x'\colon X \to Y$ denote the linear operator $x\mapsto x'(x)y$, 
which has rank at most $1$.

\begin{theorem}
	\label{thm:interior-op-cone}
	Let $X,Y$ be non-zero ordered normed spaces and assume that the cone $X_+$ is total 
	(hence, the wedge $\calL(X,Y)_+$ of positive bounded linear operators is a cone). 
	The following are equivalent:
	\begin{enumerate}[label=\upshape(\roman*)]
		\item\label{thm:interior-op-cone:itm:ops} 
		The cone $\calL(X,Y)_+$ has non-empty interior in $\calL(X,Y)$.
		
		\item\label{thm:interior-op-cone:itm:spaces} 
		The cone $Y_+$ has non-empty interior in $Y$ and there exists an equivalent norm on $X$ which is additive on $X_+$.
	\end{enumerate}
	If those equivalent assertions are satisfied, 
	then the interior of $\calL(X,Y)_+$ contains a rank-$1$ operator; 
	more specifically, for every interior point $y_0$ of $Y_+$ and every interior point $x_0'$ of $X'_+$, 
	the rank-$1$ operator $y_0\otimes x_0' $ is an interior point of $\calL(X,Y)_+$.
\end{theorem}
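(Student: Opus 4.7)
The plan is to reformulate condition~\ref{thm:interior-op-cone:itm:spaces} as the conjunction ``$Y_+$ has non-empty interior in $Y$ and $X'_+$ has non-empty interior in $X'$'', because the existence of an equivalent norm on $X$ additive on $X_+$ is the standard reformulation of the non-emptiness of $\operatorname{int}(X'_+)$. Granting this reformulation, the theorem becomes: $\calL(X,Y)_+$ has non-empty interior if and only if both $Y_+$ and $X'_+$ do, and in that case every pair $(y_0, x_0')$ of interior points of $Y_+$ and $X'_+$ gives a rank-one interior point $y_0 \otimes x_0'$ of $\calL(X,Y)_+$.

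For \Implies{thm:interior-op-cone:itm:spaces}{thm:interior-op-cone:itm:ops} together with the rank-one statement, I would fix interior points $y_0$ of $Y_+$ and $x_0'$ of $X'_+$ and pick $\delta, \eta > 0$ such that $y_0 + \delta B_Y \subseteq Y_+$ and $x_0' + \eta B_{X'} \subseteq X'_+$. The second inclusion, tested against norming functionals of vectors in $X_+$, yields the strict positivity estimate $x_0'(x) \geq \eta \norm{x}$ for every $x \in X_+$. For $S \in \calL(X,Y)$ with $\norm{S} \leq \delta \eta$ and $x \in X_+$ with $x_0'(x) > 0$ the identity
\[
	(y_0 \otimes x_0' + S)(x) = x_0'(x)\left( y_0 + \frac{S(x)}{x_0'(x)} \right)
\]
together with $\norm{S(x)/x_0'(x)} \leq \norm{S}\norm{x}/(\eta\norm{x}) \leq \delta$ forces the bracket into $y_0 + \delta B_Y \subseteq Y_+$, so $(y_0 \otimes x_0' + S)(x) \in Y_+$; the degenerate case $x_0'(x) = 0$ forces $x = 0$ by the same estimate and is trivial. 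Hence $y_0 \otimes x_0' \in \operatorname{int}(\calL(X,Y)_+)$.

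For \Implies{thm:interior-op-cone:itm:ops}{thm:interior-op-cone:itm:spaces}, fix $T_0 \in \operatorname{int}(\calL(X,Y)_+)$ and $\epsilon > 0$ with $T_0 + \epsilon B_{\calL(X,Y)} \subseteq \calL(X,Y)_+$. Since $X_+$ is total and $X \neq \{0\}$, pick $x_0 \in X_+ \setminus \{0\}$ and set $y_0 := T_0 x_0$. To show $y_0$ is interior in $Y_+$, I probe $T_0$ by the rank-one perturbation $S := y \otimes \phi$, where $\phi \in X'$ is chosen by Hahn--Banach with $\phi(x_0) = 1$ and $\norm{\phi} = 1/\norm{x_0}$; since $\norm{S} = \norm{y}/\norm{x_0}$, the condition $\norm{y} \leq \epsilon \norm{x_0}$ places $T_0 + S$ in $\calL(X,Y)_+$, and evaluation at $x_0$ gives $y_0 + y \in Y_+$. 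Next, pick $y_0' \in Y'_+$ with $y_0'(y_0) = 1$ (which exists by Hahn--Banach separation of the interior point $y_0$ from the closed cone $-Y_+$), set $x_0' := y_0' \circ T_0 \in X'_+$, and probe $T_0$ by the dual rank-one perturbation $S := y^\ast \otimes x'$, where $y^\ast \in Y$ is fixed with $y_0'(y^\ast) = 1$. Composing the positivity of $T_0 + S$ with $y_0'$ then gives $x_0'(x) + x'(x) \geq 0$ on $X_+$ whenever $\norm{x'} \leq \epsilon/\norm{y^\ast}$, so $x_0' \in \operatorname{int}(X'_+)$. The equivalent norm on $X$ additive on $X_+$ now follows from the reformulation of the first paragraph.

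The main obstacle is precisely this reformulation: ``$X$ admits an equivalent norm additive on $X_+$'' $\Leftrightarrow$ ``$\operatorname{int}(X'_+) \neq \emptyset$''. The forward direction extends the additive, positively homogeneous map $x \mapsto \norm{x}_\star$ from $X_+$ to a positive linear functional, verifies that it strictly dominates the original norm on $X_+$ and is itself norm-bounded, and thereby places it in the interior of $X'_+$. The converse is the more delicate step: from $x_0' \in \operatorname{int}(X'_+)$ with $c\norm{x} \le x_0'(x) \le C\norm{x}$ for $x \in X_+$, one must manufacture an equivalent norm on all of $X$ whose restriction to $X_+$ equals $x_0'$. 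A natural candidate is the Minkowski gauge of the weak-$\ast$ closed symmetric convex set $[-x_0', x_0'] \subseteq X'$, viewed as the dual unit ball of a renorming; verifying equivalence to the original norm and additivity on $X_+$ requires some care and in my view deserves a self-contained lemma preceding the theorem.
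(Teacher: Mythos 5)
Your two probing arguments for (i)$\Rightarrow$(ii) are essentially the paper's own (rank-one perturbations $y\otimes\phi$ and $y^\ast\otimes x'$ of an interior operator $T_0$), and your direct verification that $y_0\otimes x_0'$ is interior, based on the estimate $\langle x_0',x\rangle\ge\eta\norm{x}$ for $x\in X_+$, is a correct and in fact slightly more streamlined route to (ii)$\Rightarrow$(i) together with the rank-one statement than the paper's two-step argument (which first builds a functional $\one$ agreeing with the additive norm on $X_+$, shows $y_0\otimes\one$ is an order unit whose order-unit ball absorbs an operator-norm ball, and then uses $y_0\otimes x_0'\ge\delta(y_0\otimes\one)$). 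The genuine gap is the reformulation you lean on in both directions, namely ``$X$ admits an equivalent norm additive on $X_+$ $\Leftrightarrow$ $\operatorname{int}(X'_+)\neq\emptyset$''. The easy direction (additive norm $\Rightarrow$ interior point, via the extended functional $\one$) is fine, but the direction you actually need to close (i)$\Rightarrow$(ii) is left unproved, and the candidate you sketch for it does not work: the order interval $[-x_0',x_0']$ in $X'$ need not be norm bounded even when $x_0'$ is an interior point of $X'_+$, so it cannot in general serve as the dual unit ball of an equivalent renorming. Concretely, take $X=\ell^2$ and let $X_+$ be the closed conic hull of $\{e_1\}\cup\{e_1+2^{-n}e_n:\ n\ge2\}$; this cone is closed, pointed and total, and $x_0':=e_1^\ast$ satisfies $\langle x_0',x\rangle\ge\tfrac12\norm{x}$ for all $x\in X_+$, hence is an interior point of $X'_+$, yet $2^k e_k^\ast\in[-x_0',x_0']$ for every $k\ge2$ (check on the generators and pass to positive combinations and limits), so $[-x_0',x_0']$ is unbounded and the induced seminorm $p(x)=\sup\{\lvert\langle x',x\rangle\rvert:\ -x_0'\le x'\le x_0'\}$ is not dominated by any multiple of the original norm.

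The paper bridges exactly this point by quoting Rubinov's renorming theorem (\cite[Theorems VII.1.1 and VII.3.1]{Wul2017}), which gives ``$\operatorname{int}(X'_+)\neq\emptyset\ \Rightarrow$ equivalent norm additive on $X_+$''. So you should either cite that result, or prove your missing lemma with a corrected construction, e.g.\ $\norm{x}_\star:=\inf\{\langle x_0',u\rangle+\langle x_0',v\rangle+\norm{x_0'}\,\norm{w}:\ x=u-v+w,\ u,v\in X_+,\ w\in X\}$, which is an equivalent norm on $X$ coinciding with $x_0'$ on $X_+$ and hence additive there; equivalently, one must intersect $[-x_0',x_0']$ with a suitable multiple of the dual unit ball before dualizing. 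With such a lemma in place your argument is sound, up to the small observation that $y_0=T_0x_0\neq0$ (since $0$ cannot be interior in the pointed cone $Y_+$ of the non-zero space $Y$), which your Hahn--Banach separation step producing $y_0'$ implicitly requires.
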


\begin{proof}
	\begin{description}
		\item[\normalfont\Implies{thm:interior-op-cone:itm:ops}{thm:interior-op-cone:itm:spaces}] 
		Let $T_0$ be in the interior of $\calL(X,Y)_+$. 
		Then there exists $\varepsilon >0$ such that the $\varepsilon$-ball centered in $T_0$ is contained in $\calL(X,Y)_+$.
	
		We first show that $Y_+$ has non-empty interior.  
		To this end, fix $x\in X_+\setminus\{0\}$ and choose $x'\in X'$ such that $\langle x', x\rangle = 1$.
		Define $y_0:=T_0 x$, which is in $Y_+$. 
		We claim that the $\frac{\varepsilon}{\|x'\|}$-ball in $Y$ centered at $y_0$ is contained in $Y_+$. 
		Indeed, let $y$ be in this ball and define $T:=T_0+(y-y_0)\otimes x'$. 
		Then $Tx=y$ and $\|T-T_0\|\leq \|y-y_0\|\,\|x'\|<\varepsilon$. 
		So $T$ is positive and hence $y\in Y_+$.
		
		Now we show that there exists an equivalent norm on $X$ that is additive on $X_+$. 
		According to a result by Rubinov \cite[Theorems VII.1.1 and VII.3.1]{Wul2017} 
		it suffices to show that $X'$ has non-empty interior.
		Fix $y'\in Y_+'\setminus\{0\}$ and $y\in Y$ such that $\langle y', y\rangle = 1$. 
		Define $x_0'\colon x\mapsto \langle y', T_0x\rangle$ which is in $X_+'$. 
		We claim that the $\frac{\varepsilon}{\|y\|}$-ball in $X'$ centered at $x_0'$ is contained in $X_+'$. 
		Indeed, let $x'$ be in this ball and define $T:=T_0+y\otimes(x'-x_0')$. 
		Then $T\in \calL(X,Y)_+$, and for every $x\in X_+$ we have 
		$0\leq \langle y',Tx\rangle=\langle x',x\rangle$, which implies that $x'\in X'_+$.		
		
		\item[\normalfont\Implies{thm:interior-op-cone:itm:spaces}{thm:interior-op-cone:itm:ops}]
		Without loss of generality, we assume that the norm on $X$ is additive on $X_+$. 
		Then there exists a functional $\one \in X'$ that satisfies $\langle \one, x \rangle = \norm{x}$ for all $x \in X_+$. 
		Indeed, the functional $\one$ on $\operatorname{span}{X_+}$ given by 
		$\langle \one,x-y\rangle := \|x\|-\|y\|$ for all $x,y \in X_+$ is well-defined and continuous 
		with respect to the norm inherited from $X$. 
		Hence, it can be continuously extended to the closure of $\operatorname{span}{X_+}$ and then, 
		by the Hahn--Banach extension theorem, to all of $X$.
		
		Let $y_0$ be in the interior of $Y_+$. By Proposition \ref{pro:innerpoint}, 
		there exists a real number $C > 0$ such that for every $y\in Y$ we have $\|y\|_{y_0}\leq C \|y\|$.
		We show that $y_0 \otimes \one$ is an interior point of $\calL(X,Y)_+$.		
		Indeed, let $T \in \calL(X,Y)$ be such that $\norm{T} \le \frac{1}{C}$. 
		For every $x \in X_+$, one has 
		\begin{align*}
			Tx 
			\le 
			\norm{Tx}_{y_0} y_0 
			\leq C\norm{Tx} y_0\leq
			\norm{x} y_0 
			= 
			\langle \one, x \rangle y_0
			= 
			(y_0 \otimes \one) x
			,
		\end{align*}
		so $T \le y_0 \otimes \one$. 
		As the same argument applies to $-T$, we conclude that $-y_0 \otimes \one \le T \le y_0 \otimes \one$. 
		Consequently, $y_0 \otimes \one$ is an order unit, 
		and the $\frac{1}{C}$-ball of the operator norm is contained in the unit ball of the order unit norm. 
		Thus, $y_0 \otimes \one$ is an interior point of $\calL(X,Y)_+$ with respect to the operator norm.
	\end{description}
	\smallskip 
	
	Finally, assume that the equivalent assertions~\ref{thm:interior-op-cone:itm:ops} 
	and~\ref{thm:interior-op-cone:itm:spaces} are satisfied. 
	If $y_0$ is an interior point of $Y_+$, then $y_0 \otimes \one$ is an interior point of $\calL(X,Y)_+$ 
	as shown in the second implication. 
	If $x_0'$ is an interior point of $X'_+$, then there is a number $\delta > 0$ such that $x_0' \ge \delta \one$.
	Hence, $y_0 \otimes x_0' \ge \delta (y_0 \otimes \one)$, which implies that $y_0 \otimes x_0'$ is an interior point of $\calL(X,Y)_+$.
\end{proof}

A related result in finite dimensions can be found in \cite[Proposition~2.2]{GluHoe2023}.

\section{Extremal functionals on operator spaces}
\label{sec:extremal}

Let $X$ and $Y$ be ordered normed vector spaces 
and let $\calC(X,Y)$ denote the closure of the subspace of finite rank operators in $\calL(X,Y)$. 
We set $\calC(X,Y)_+ := \calC(X,Y) \cap \calL(X,Y)_+$. 
If $Y$ is a Banach space with the approximation property, 
then $\calC(X,Y)$ coincides with the space of compact linear operators from $X$ to $Y$. 
For all $x \in X$ and all $y' \in Y'$, we let $y' \otimes x \in \calC(X,Y)'$ denote the functional 
that is given by 
\begin{align*}
	\langle y' \otimes x, \, T \rangle := \langle y', Tx \rangle
\end{align*}
for all $T \in \calC(X,Y)$. 
We note that the norm of the functional $y' \otimes x$ is given by
\begin{align}
	\label{eq:norm-tensor-functional}
	\norm{y' \otimes x} = \norm{y'} \norm{x};
\end{align}
indeed, the inequality $\le$ is clear and the converse inequality 
can be checked by applying the functional to rank-$1$ operators.

The goal of this section is to show the following theorem. 
A related, but simpler finite-dimensional result can be found in \cite[Proposition~2.3]{GluHoe2023}.

\begin{theorem}
	\label{thm:extremal-ops}
	Let $X$ and $Y$ be ordered normed spaces with the following properties: 
	\begin{enumerate}[label=\upshape(\arabic*)]
		\item 
		The cone $X_+$ is total and normal, 
		and every extremal vector $x$ of $X_+$ is also extremal in the bidual cone $X''$.
		
		\item
		The cone $Y_+$ is total.
	\end{enumerate}
	For non-zero vectors $x \in X_+$ and $y' \in Y'_+$, 
	the functional $y' \otimes x \in \calC(X,Y)'_+$ is extremal in $\calC(X,Y)'_+$ 
	if and only if $x$ is extremal in $X'_+$ and $y$ is extremal in $Y_+$.
\end{theorem}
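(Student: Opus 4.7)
The plan is to prove both implications, and for both directions the computations will hinge on evaluating suitable functionals on rank-one operators.

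For the ``only if'' direction, assume $y'\otimes x$ is extremal in $\calC(X,Y)'_+$. Given any $z\in X$ with $0\le z\le x$, one has $0\le y'\otimes z\le y'\otimes x$, so $y'\otimes z=\lambda(y'\otimes x)$ for some $\lambda\ge 0$. Evaluating this identity on a rank-one operator $y\otimes\tilde{x}'$ with $y\in Y$ chosen so that $\langle y',y\rangle\ne 0$ (possible since $y'\ne 0$) gives $\langle\tilde{x}',z-\lambda x\rangle=0$ for every $\tilde{x}'\in X'$, and Hahn--Banach yields $z=\lambda x$. A symmetric argument, now evaluating on $y\otimes\tilde{x}'$ with $\tilde{x}'\in X'$ chosen so that $\langle\tilde{x}',x\rangle\ne 0$, establishes extremality of $y'$ in $Y'_+$.

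For the ``if'' direction, assume $x$ is extremal in $X_+$ (hence, by hypothesis, also in $X''_+$) and $y'$ is extremal in $Y'_+$. Fix $\phi\in\calC(X,Y)'_+$ with $0\le\phi\le y'\otimes x$ and introduce
\begin{align*}
B\colon Y\times X'\to\bbR, \qquad B(y,\tilde{x}'):=\phi(y\otimes\tilde{x}'),
\end{align*}
which by \eqref{eq:norm-tensor-functional} is jointly continuous with $|B(y,\tilde{x}')|\le\|\phi\|\,\|y\|\,\|\tilde{x}'\|$. For fixed $y\in Y_+$, the map $\tilde{x}'\mapsto B(y,\tilde{x}')$ is an element of $X''$, and on the dual cone we have the bound $0\le B(y,\tilde{x}')\le \langle y',y\rangle\langle\tilde{x}',x\rangle$. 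Hence $0\le B(y,\cdot)\le\langle y',y\rangle\,x$ in $X''_+$, with $x$ identified with its image under the bipositive canonical embedding $X\hookrightarrow X''$ (bipositivity follows from $X_+$ being closed). Extremality of $x$ in $X''_+$ now produces a scalar $\mu(y)\ge 0$ with $B(y,\cdot)=\mu(y)\,x$.

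The map $\mu\colon Y_+\to[0,\infty)$ is additive and positively homogeneous, and fixing any $\tilde{x}'_0\in X'$ with $\langle\tilde{x}'_0,x\rangle\ne 0$ one has $\mu(y)=B(y,\tilde{x}'_0)/\langle\tilde{x}'_0,x\rangle$, which makes $\mu$ norm continuous on $Y_+$; extending additively to $Y_+-Y_+$ preserves continuity, and totality of $Y_+$ yields a unique continuous extension $y'_\phi\in Y'$. Continuity of $B$ in its first slot then propagates the identity $B(y,\cdot)=\langle y'_\phi,y\rangle\,x$ to every $y\in Y$, so $\phi$ and $y'_\phi\otimes x$ coincide on rank-one operators and therefore, by linearity and continuity together with the density of finite-rank operators in $\calC(X,Y)$, on all of $\calC(X,Y)$. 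Testing the bounds $0\le\phi\le y'\otimes x$ on operators $y\otimes\tilde{x}'$ with $y\in Y_+$ and $\tilde{x}'\in X'_+$ satisfying $\langle\tilde{x}',x\rangle>0$ (such $\tilde{x}'$ exist because normality of $X_+$ makes $X'_+$ generating in $X'$) translates to $0\le y'_\phi\le y'$ in $Y'_+$, and extremality of $y'$ supplies $\lambda\ge 0$ with $y'_\phi=\lambda y'$, yielding $\phi=\lambda(y'\otimes x)$. The principal obstacle is the key intermediate step of extracting $\mu(y)$ via extremality: since $B(y,\cdot)$ naturally lives in $X''$, one is forced to invoke extremality in the bidual cone, which is precisely what hypothesis~(1) supplies; normality of $X_+$ then carries the remaining duality burden.
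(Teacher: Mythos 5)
Your proof is correct and takes essentially the same route as the paper's: the paper merely abstracts the argument into Lemma~\ref{lem:bilinear} on bilinear maps and, in the hard direction, packages the dominated functional into an operator $S\colon X'\to Y'$, applying extremality of $y'$ first and extremality of $x$ in $X''_+$ second, whereas you work directly with $\phi$ on rank-one operators and use the two extremality hypotheses in the mirrored order. The essential ingredients — testing against rank-one operators, $X'_+$ generating via normality, totality of $Y_+$, and extremality of $x$ in the bidual cone — coincide in both arguments.
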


The assumption that every extremal vector $x$ of $X_+$ be also extremal in $X''_+$ 
might seem rather strong or peculiar at first glance. 
But we shall see in Section~\ref{sec:repr-op-space} that this assumption is satisfied in a variety of situations.

To make the proof of Theorem~\ref{thm:extremal-ops} as transparent as possible, 
we phrase it in the more abstract setting of the following result. 
Despite its higher generality, we call the following result a lemma as it is mainly a tool for us 
in order to obtain Theorem~\ref{thm:extremal-ops} as a special case afterwards. 
We use the following terminology: 
if $X,Y,Z$ are pre-ordered vector spaces, a bilinear map $a: X \times Y \to Z$ is called \emph{positive} 
if $a(x,y) \in Z_+$ for all $x \in X_+$ and $y \in Y_+$.

\begin{lemma}
	\label{lem:bilinear}
	Let $X,Y,Z$ be ordered normed spaces with the following properties:
	\begin{enumerate}[label=\upshape(\arabic*)]
		\item 
		The cone $X_+$ is total and normal, 
		and every extremal vector $x$ of $X_+$ is also extremal in the bidual cone $X''_+$.
		
		\item
		The cone $Y_+$ is total.
	\end{enumerate}
	Consider continuous, positive, bilinear maps 
	\begin{align*}
		a\colon \;  &  X  \times Y' \to Z, \\ 
		b\colon \;  &  X' \times Y  \to Z'
	\end{align*} 
	such that the range $b(X' \times Y) \subseteq Z'$ separates the points of $Z$ and such that the property
	\begin{align}
		\label{equ:abxy}
		\big\langle a(x,y'),b(x',y)\big\rangle_{\langle Z, Z' \rangle}
		=
		\langle x',x\rangle_{\langle X', X\rangle}  \cdot  \langle y',y\rangle_{\langle Y', Y \rangle}
	\end{align}
	holds for all $x\in X$, $x'\in X'$ and all $y\in Y$, $y'\in Y'$. 
	
	Then, for every non-zero $x\in X_+$ and every non-zero $y'\in Y'_+$, 
	the vector $a(x,y') \in Z_+$ is extremal in $Z_+$ if and only if $x$ is extremal in $X_+$ and $y'$ is extremal in $Y'_+$.
\end{lemma}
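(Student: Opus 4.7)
\emph{Overall approach.} Necessity is a short contraposition in each coordinate. Sufficiency is the substantive half: I view $z \in [0,a(x,y')]$ as inducing a bilinear form on $X' \times Y$ that is sandwiched below $\langle \cdot, x\rangle\langle y', \cdot\rangle$, and then exploit extremality on each side to identify that form with a scalar multiple of $a(x,y')$.

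\emph{Necessity.} Suppose, say, $x$ is not extremal in $X_+$ and pick $x_1 \in [0,x]$ that is not a scalar multiple of $x$. Positivity of $a$ gives $a(x_1,y')$ and $a(x,y')-a(x_1,y')=a(x-x_1,y')$ in $Z_+$, so $a(x_1,y') \in [0,a(x,y')]$. Extremality of $a(x,y')$ in $Z_+$ forces $a(x_1,y') = \lambda\,a(x,y')$ for some $\lambda \in \R$. Pairing with $b(x',y)$ via~\eqref{equ:abxy}, and choosing $y \in Y$ with $\langle y', y\rangle \neq 0$ (which exists since $y' \ne 0$ and $Y_+$ is total), I obtain $\langle x', x_1\rangle = \lambda\langle x', x\rangle$ for every $x' \in X'$, hence $x_1 = \lambda x$, a contradiction. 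The case of non-extremal $y'$ is symmetric.

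\emph{Sufficiency --- sandwich step.} Let $z \in [0, a(x,y')]$. For fixed $y \in Y_+$, the map $\phi_y \colon x' \mapsto \langle z, b(x',y)\rangle$ is a continuous linear functional on $X'$, hence an element of $X''$. Positivity of $b$ together with $0 \le z \le a(x,y')$ yields $0 \le \phi_y \le \langle y', y\rangle\, x$ in $X''_+$, where $x$ is identified with its image in $X''$ under the canonical embedding. By hypothesis~(1), $x$ is extremal in $X''_+$, so $\phi_y = \mu(y)\, x$ in $X''$ for some $\mu(y) \ge 0$. The symmetric argument, using extremality of $y'$ in $Y'_+$, produces a function $\nu$ on $X'_+$ with $\psi_{x'} \colon y \mapsto \langle z, b(x',y)\rangle$ equal to $\nu(x')\, y'$ for every $x' \in X'_+$, and $\nu(x') \ge 0$.

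\emph{Pinning down the scalar and main obstacle.} Comparing the two formulas on $X'_+ \times Y_+$ gives $\mu(y)\langle x', x\rangle = \nu(x')\langle y', y\rangle$. Picking $y_0 \in Y_+$ with $\langle y', y_0\rangle > 0$ (which exists by totality of $Y_+$ and $y' \ne 0$) and setting $\lambda := \mu(y_0)/\langle y', y_0\rangle$, I obtain $\mu(y) = \lambda\langle y', y\rangle$ for all $y \in Y_+$. Combined with the sandwich step, this reads $\langle z, b(x',y)\rangle = \lambda\langle x', x\rangle\langle y', y\rangle = \langle \lambda\, a(x,y'),\, b(x',y)\rangle$ for all $x' \in X'$ and $y \in Y_+$; continuity in $y$ and totality of $Y_+$ extend the identity to all $y \in Y$, and the separation property of $b(X' \times Y)$ then forces $z = \lambda\, a(x, y')$. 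The delicate move is the sandwich step: $\phi_y$ lives in $X''$ and is in general not induced by any element of $X$, so hypothesis~(1) is exactly what is needed to collapse $\phi_y$ onto the ray through $x$ using extremality in $X''_+$ rather than merely in $X_+$.
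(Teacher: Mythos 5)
Your proof is correct and follows essentially the same route as the paper: necessity via pairing with $b(x',y)$ (equivalently, bipositivity of the partial maps of $a$), and sufficiency by sandwiching the form induced by $z$, applying extremality of $y'$ in $Y'_+$ and of $x$ in the bidual cone $X''_+$, and finishing with the separation property of $b(X'\times Y)$. The only organizational difference is that you treat the slices $\phi_y\in X''$ directly, whereas the paper packages them into one operator $S\colon X'\to Y'$ and then needs $X'_+$ generating (i.e.\ normality of $X_+$) to pass from $SX'_+\subseteq \R y'$ to $SX'\subseteq\R y'$; your variant only needs the (automatic, since $X_+$ is a closed cone) existence of some $x'\in X'_+$ with $\langle x',x\rangle>0$ when you pin down the scalar $\lambda$, a one-line fact you should state explicitly.
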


In a purely algebraic situation -- without any topologies involved --
a result in a similar spirit was shown by van Dobben de Bruyn for projective tensor products of cones in \cite[Theorem~3.22]{deBruyn2022}. 
Compare also Section~1 of the classical paper \cite{NamiokaPhelps1969} by Namioka and Phelps. 
Related results about extreme points of unit balls in operator spaces can be found 
in \cite{LimaOlsen1985, RuessStegall1982}.

\begin{proof}[Proof of Lemma~\ref{lem:bilinear}] 
	We begin with some basic observations.
	First note that, as $X_+$ and $Y_+$ are total, $X'_+$ and $Y'_+$ are cones. 
	Moreover, $X'_+$ is generating since $X_+$ is normal \cite[Theorem~4.5]{KrasnoselskiiLifshitsSobolev2989}.
	
	Next we observe that, for every fixed $y'\in Y'_+\setminus\{0\}$, the map $a(\argument, y')\colon X \to Z$ is bipositive. 
	Indeed, let $x\in X$ be such that $a(x,y')\geq 0$. 
	As $Y_+$ is total, we can choose $y\in Y_+$ such that $\left\langle y,y'\right\rangle=1$.
	For every $x'\in X_+'$, one gets $b(x',y)\in Z'_+$ and thus, by~\eqref{equ:abxy}, 
	$\left\langle x,x'\right\rangle\geq 0$. 
	Hence, $x\geq 0$. 
	Analogously, for every fixed $x\in X_+\setminus\{0\}$, the map $a(x,\argument): Y' \to Z$ is bipositive.
	
	Now we show the claimed equivalence. 
	Fix non-zero vectors $x \in X_+$ and $y' \in Y'_+$. 
	\begin{itemize}
		\item[``$\Rightarrow$'']
		Assume that $a(x,y')$ is extremal in $Z_+$. 
		
		Let $0\leq v\leq x$. 
		As $a$ is entry-wise positive, we get $0\leq a(v,y')\leq a(x,y')$. 
		By the extremality of  $a(x,y')$, 
		there is $\alpha\in [0,\infty)$ such that $a(v,y')=\alpha a(x,y')=a(\alpha x, y')$. 
		As $a(\cdot, y')$ is bipositive and, hence, injective, we obtain $v=\alpha x$. 
		Therefore, $x$ is extremal in $X_+$. 
		Analogously one gets that $y'$ is extremal in $Y'_+$.
		
		\item[``$\Leftarrow$'']
		Let $x$ be extremal in $X_+$ and let $y'$ be extremal in $Y'_+$.
		Let $z\in Z$ be such that $0\leq z\leq a(x,y')$. 
		We have to show that $z$ is a multiple of $a(x,y')$.
		To this end, consider the positive bounded linear map $S\colon X' \to Y'$, $x' \mapsto \langle z, b(x',\argument)\rangle$.
		For all $x' \in X'_+$ and $y\in Y_+$ one has 
		\begin{align}
			\label{equ:bilinear-z}
			\langle Sx', y \rangle 
			=
			\langle z, b(x',y)\rangle
			\leq 
			\langle a(x,y'), b(x',y)\rangle
			=
			\langle x',x\rangle
			\langle y',y\rangle
			,
		\end{align}
		so $0 \leq S x'\leq \langle x,x'\rangle y'$ for all $x' \in X'_+$. 
		As $y'$ is extremal in $Y'_+$ we conclude that $S$ maps $X'_+$ into the one-dimensional subspace $\bbR y'$ of $Y'$. 
		Since $X'_+$ is generating in $X$, it follows that even $SX' \subseteq \bbR y'$. 
		Thus, there exists a functional $x_0'' \in X''_+$ such that
		\begin{align}
			\label{equ:S-formula}
			Sx' = \langle x_0'', x' \rangle y'
		\end{align}
		for all $x' \in X'$. 
		Indeed, for $x'\in X'$ there exists $x''_0(x')\in \mathbb{R}$ such that $Sx'=x''_0(x')y'$. 
		As $y'\in Y'_+\setminus\{0\}$, one obtains that $x''_0$ is linear, positive and continuous 
		by the corresponding properties of $S$. 
		
		Next, we show that $x_0'' \le x$ in $X''$. 
		To this end, note that the formula~\eqref{equ:S-formula} for $S$ together with~\eqref{equ:bilinear-z} gives, 
		for all $y \in Y_+$ and all $x' \in X'_+$,
		\begin{align*}
			\langle x_0'', x' \rangle  \langle y', y \rangle 
			=
			\langle Sx', y \rangle
			\le 
			\langle x',x\rangle
			\langle y',y\rangle
			.
		\end{align*}
		Since $y'$ is non-zero and $Y_+$ is total in $Y$, 
		we can find a vector $y \in Y_+$ such that $\langle y', y \rangle > 0$. 
		Hence, we conclude that $\langle x_0'', x' \rangle \le \langle x',x\rangle$ for all $x' \in X'_+$, 
		therefore $x_0'' \le x$.
		
		We know that $x$ is extremal in $X_+$ and thus, by the assumption of the theorem, also in $X''_+$. 
		Thus, there exists a number $\lambda\in [0,\infty)$ such that $x_0''=\lambda x$. 
		For all $x' \in X'$ this gives 
		$\langle z, b(x',\argument)\rangle = Sx' = \langle x_0'', x' \rangle y' = \lambda \langle x', x \rangle y'$
		and, hence, for all $x' \in X'$ and $y \in Y$, 
		\begin{align*}
			\langle z, b(x',y)\rangle 
			= 
			\lambda \langle x', x \rangle \langle y', y \rangle 
			= 
			\langle \lambda a(x,y'),b(x',y)\rangle
			,
		\end{align*}
		where the second equality follows from~\eqref{equ:abxy} again.
		As the range of $b$ separates the points of $Z$ by assumption, 
		we conclude that $z=\lambda a(x,y')$. 
		\qedhere
	\end{itemize}
\end{proof}

\begin{proof}[Proof of Theorem~\ref{thm:extremal-ops}]
	We are going to apply Lemma~\ref{lem:bilinear}; 
	to this end, choose $Z := \calC(X,Y)'$. 
	We first show that the wedge $Z_+$ in this space is a cone. 
	Indeed, since $X_+$ is normal, the cone $X'_+$ is generating. 
	This together with the fact that $Y_+$ is total implies that every finite rank operator in $\calL(X,Y)$ 
	can be approximated in operator norm by a sequence of differences of positive finite-rank operators. 
	In other words, every finite rank operator can be approximated by a sequence from the span of $\calC(X,Y)_+$. 
	Since $\calC(X,Y)$ is the closure of the finite rank operators, we conclude that the cone $\calC(X,Y)_+$ 
	is total in $\calC(X,Y)$ and, hence, the dual wedge is indeed a cone. 
	
	Now consider the bilinear maps
	\begin{align*}
		a\colon X \times Y' \to Z = \calC(X,Y)', 
		\qquad 
		(x,y') \mapsto y' \otimes x
		, 
		\\
		b\colon X' \times Y \to \calC(X,Y) \subseteq \calC(X,Y)'' = Z',
		\qquad 
		(x',y) \mapsto y \otimes x'
		.
	\end{align*}
	Those are continuous and positive, and a brief computation shows that they satisfy the equality~\eqref{equ:abxy}. 
	The span of the range of $b$ is the set of all finite rank operators in $\calL(X,Y)$ 
	and is thus dense in $\calC(X,Y)$; 
	thus, it separates the points of $Z = \calC(X,Y)'$. 
	So all assumptions of Lemma~\ref{lem:bilinear} are satisfied 
	and we obtain the claimed equivalence.
\end{proof}

\section{A functional representation for the closure of the finite rank operators}
\label{sec:repr-op-space}

The goal of this section is to give, 
under appropriate assumptions on $X$ and $Y$, 
a vector lattice cover of the operator space $\calC(X,Y)$ 
defined at the beginning of Section~\ref{sec:extremal}. 
The following is our main result.

\begin{theorem}
	\label{thm:vcl-ops}
	Let $X,Y$ be non-zero ordered normed spaces with the following properties:
	\begin{enumerate}[label=\upshape(\arabic*)]
		\item\label{thm:vcl-ops:itm:X} 
		The cone $X_+$ is total, there exists an equivalent norm on $X$ that is additive on $X_+$, 
		and every extremal vector of $X_+$ is also extremal in the bidual cone $X''_+$.
		Moreover, the convex hull of the extremal vectors in $X_+$ is dense in $X_+$.
		
		\item\label{thm:vcl-ops:itm:Y} 
		The cone $Y_+$ has non-empty interior.
	\end{enumerate}
	For an interior point $y_0$ of $Y_+$ and an interior point $x_0'$ of $X'_+$,  
	define the subset
	\begin{align*}
		S' 
		:=
		\big\{ 
			y' \otimes x \colon \; 
			x \text{ is extremal in } X_+, \; 
			y' \text{ is extremal in } Y'_+, 
			\text{ and } 
			\langle y', y_0\rangle \langle x_0', x \rangle = 1 
		\big\}
	\end{align*}
	of $\calC(X,Y)'$.
	Endow its weak-$\ast$ closure $\overline{S'}$ with the weak-$\ast$ topology and consider the composition
	\begin{align*}
		\Phi\colon \quad \calC(X,Y) \to \calC(X,Y)'' \to \rmC( \overline{S'} ),
	\end{align*}
	of the canonical embedding and the restriction to $\overline{S'}$. 
	Then the space $\rmC(\overline{S'})$ together with the map $\Phi$ is a vector lattice cover of $\calC(X,Y)$. 
\end{theorem}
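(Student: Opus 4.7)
The plan is to apply Theorem~\ref{thm:vcl-general} to $Z := \calC(X,Y)$ with the distinguished element $z_0 := y_0 \otimes x_0'$ and the given set $S'$. So I have to check that $z_0$ is an interior point of $\calC(X,Y)_+$ and that $S'$ satisfies conditions \ref{thm:vcl-general:itm:base}--\ref{thm:vcl-general:itm:determine} of that theorem. For the interior point: under hypotheses~\ref{thm:vcl-ops:itm:X} and~\ref{thm:vcl-ops:itm:Y}, condition~\ref{thm:interior-op-cone:itm:spaces} of Theorem~\ref{thm:interior-op-cone} is met, so its final assertion gives that $y_0 \otimes x_0'$ is an interior point of $\calL(X,Y)_+$. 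Since $y_0 \otimes x_0'$ is of rank $1$ it lies in $\calC(X,Y)$; and since $\calC(X,Y)_+ = \calC(X,Y) \cap \calL(X,Y)_+$, it is an interior point of $\calC(X,Y)_+$ as well.

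Conditions~\ref{thm:vcl-general:itm:base} and~\ref{thm:vcl-general:itm:extremal} are then quick. For \ref{thm:vcl-general:itm:base}: for every $s' = y' \otimes x \in S'$ a direct computation gives $\langle s', z_0 \rangle = \langle y', (y_0 \otimes x_0')(x) \rangle = \langle x_0',x \rangle \langle y', y_0 \rangle = 1$. For \ref{thm:vcl-general:itm:extremal}: I apply Theorem~\ref{thm:extremal-ops}, whose hypotheses follow from~\ref{thm:vcl-ops:itm:X} and~\ref{thm:vcl-ops:itm:Y}. Indeed, any norm that is additive on $X_+$ is automatically monotone on $X_+$ (since $0 \le x_1 \le x_2$ gives $\norm{x_2} = \norm{x_1} + \norm{x_2 - x_1} \ge \norm{x_1}$), so the equivalent additive norm assumption from~\ref{thm:vcl-ops:itm:X} implies that $X_+$ is normal; the totality of $X_+$ and the bidual-extremality are explicit in~\ref{thm:vcl-ops:itm:X}; and $Y_+$ has non-empty interior by~\ref{thm:vcl-ops:itm:Y}, so $Y_+$ is generating and, in particular, total.

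The main work is condition~\ref{thm:vcl-general:itm:determine}. Assume $T \in \calC(X,Y)$ satisfies $\langle y',Tx \rangle = \langle y' \otimes x, T \rangle \ge 0$ for every $y' \otimes x \in S'$. Since $x_0'$ is an interior point of $X'_+$, Proposition~\ref{pro:innerpoint} shows $x_0'$ is an order unit of $X'$, and a short argument gives $\langle x_0', x \rangle > 0$ for every nonzero $x \in X_+$; symmetrically $\langle y', y_0 \rangle > 0$ for every nonzero $y' \in Y'_+$. So the normalization in the definition of $S'$ can be removed by scaling with positive reals, and the inequality $\langle y', Tx \rangle \ge 0$ actually holds for every extremal $x \in X_+$ and every extremal $y' \in Y'_+$. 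By hypothesis~\ref{thm:vcl-ops:itm:X}, the convex hull of the extremals of $X_+$ is dense in $X_+$, so by continuity of $T$ and closedness of $Y_+$ it suffices to show $Tx \in Y_+$ for $x$ extremal. But $\{y' \in Y'_+ : \langle y', y_0 \rangle = 1\}$ is a weak-$\ast$ compact convex base of $Y'_+$, so by the Krein--Milman theorem together with Proposition~\ref{prop:zeus}, the extremals of $Y'_+$ determine positivity on $Y$. Hence $Tx \in Y_+$ for every extremal $x$, and therefore $T \ge 0$.

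I expect condition~\ref{thm:vcl-general:itm:determine} to be the main obstacle, since it is the step in which the two separate density hypotheses on $X$ (density of the convex hull of extremals of $X_+$) and on $Y$ (non-empty interior, yielding the Krein--Milman statement for $Y'_+$) have to be combined, and where the normalizing denominator $\langle y',y_0 \rangle \langle x_0', x \rangle$ has to be stripped away by the positivity estimates coming from interiority of $y_0$ and $x_0'$. Once this is in place, Theorem~\ref{thm:vcl-general} applies directly and yields the claimed vector lattice cover.
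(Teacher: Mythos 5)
Your proposal is correct and follows essentially the same route as the paper: apply Theorem~\ref{thm:vcl-general} to $Z=\calC(X,Y)$ with $z_0=y_0\otimes x_0'$ (an interior point of $\calC(X,Y)_+$ via Theorem~\ref{thm:interior-op-cone}), verify extremality of the elements of $S'$ via Theorem~\ref{thm:extremal-ops}, and check that $S'$ determines positivity by combining the density of the convex hull of extremals of $X_+$ with the Krein--Milman argument for $Y'_+$. Your treatment is in fact slightly more explicit than the paper's at two points (removing the normalization $\langle y',y_0\rangle\langle x_0',x\rangle=1$ by the strict positivity coming from interiority, and deriving normality of $X_+$ from the additive equivalent norm), which are glossed over in the published proof.
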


\begin{proof}
	We are going to apply Theorem~\ref{thm:vcl-general} to the space $Z := \calC(X,Y)$ 
	and only need to check that all assumptions of that theorem are satisfied. 
	
	The assumptions on $X$ and $Y$ imply, according to Theorem~\ref{thm:interior-op-cone}, 
	that the positive cone in the space $\calL(X,Y)$ has non-empty interior 
	and that the rank-$1$ operator $z_0 := x_0' \otimes y_0$ 
	is an interior point of $\calL(X,Y)_+$. 
	Hence, it is also an interior point of $\calC(X,Y)_+$.
		
	It remains to check the assumptions \ref{thm:vcl-general:itm:base}--\ref{thm:vcl-general:itm:determine} 
	that are imposed on the set $S'$ in Theorem~\ref{thm:vcl-general}:
	\begin{itemize}
		\item[\ref{thm:vcl-general:itm:base}]
		The condition $\langle y', y_0\rangle \langle x_0', x \rangle = 1$ in the definition of $S'$ 
		means that $\langle x \otimes y', z_0 \rangle = 1$.
		
		\item[\ref{thm:vcl-general:itm:extremal}]
		Theorem~\ref{thm:extremal-ops}, which is applicable due to our assumptions on $X$ and $Y$, 
		shows that $S'$ consists of extremal vectors only.
		
		\item[\ref{thm:vcl-general:itm:determine}]
		To check that $S'$ determines positivity, let $T \in \calC(X,Y)$ be an operator 
		such that $\langle y' \otimes x, T \rangle = \langle y', Tx \rangle \ge 0$ 
		for all $y' \otimes x \in S'$. 
		Consider an extremal vector $x \in X_+$ of norm $1$.
		Since $Y_+$ has non-empty interior, by Proposition \ref{pro:innerpoint}, 
		the set of all extremal vectors of $Y'_+$ of norm $1$ determines positivity of elements of $Y$, 
		so it follows that $Tx \ge 0$. 
		Since we assumed the convex hull of the extremal vectors in $X_+$ to be dense in $X_+$, 
		we thus conclude that $T \ge 0$. 
		\qedhere
	\end{itemize}
\end{proof}

While the assumptions on $X$ in Theorem~\ref{thm:vcl-ops} are rather strong, 
there are still a variety of spaces for which they are satisfied; 
see the examples at the end of this section. 
A nice consequence of Theorem~\ref{thm:vcl-ops} is that it gives us a concrete method 
to check whether two elements of $\calC(X,Y)$ are disjoint.

\begin{corollary} 
	\label{cor:disjoint-operators}
	In the setting of Theorem~\ref{thm:vcl-ops}, 
	two operators $T_1, T_2\in \mathcal{C}(X,Y)$ are disjoint if and only if, 
	for all extremal vectors $x\in X_+$ and $y' \in Y'_+$, 
	one has $\langle y', T_1x \rangle = 0$ or $\langle y', T_2x \rangle = 0$.
\end{corollary}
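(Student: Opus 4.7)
My plan is to obtain the corollary as a direct consequence of the abstract disjointness criterion in Corollary~\ref{cor:disjoint-determine-pos}, applied to the ordered normed space $Z := \calC(X,Y)$. The assumptions of that earlier corollary were already verified in the course of proving Theorem~\ref{thm:vcl-ops}: the cone $\calC(X,Y)_+$ has non-empty interior (the rank-$1$ operator $y_0 \otimes x_0'$ is interior, by Theorem~\ref{thm:interior-op-cone}), and the set $S'$ from the statement of Theorem~\ref{thm:vcl-ops} is a subset of extremal vectors of $\calC(X,Y)'_+$ (by Theorem~\ref{thm:extremal-ops}) that determines positivity on $\calC(X,Y)$. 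Thus Corollary~\ref{cor:disjoint-determine-pos} directly yields that $T_1,T_2 \in \calC(X,Y)$ are disjoint if and only if $\langle y' \otimes x, T_1 \rangle = 0$ or $\langle y' \otimes x, T_2 \rangle = 0$ for every $y' \otimes x \in S'$; by the definition of $y' \otimes x$, this reads $\langle y', T_1 x \rangle = 0$ or $\langle y', T_2 x \rangle = 0$.

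What remains is to bridge the gap between the normalized elements $y' \otimes x \in S'$ and the unrestricted quantification in the statement of the corollary over arbitrary extremal $x \in X_+$ and $y' \in Y'_+$. The point is that the vanishing condition $\langle y', T_i x\rangle = 0$ is invariant under positive rescaling of $x$ and $y'$ separately. Moreover, for any \emph{non-zero} extremal $x \in X_+$ and $y' \in Y'_+$, both scalars $\langle x_0', x \rangle$ and $\langle y', y_0 \rangle$ are strictly positive. This follows from the fact that $x_0'$ is an interior point of $X'_+$ and $y_0$ is an interior point of $Y_+$: a short Hahn--Banach type argument shows that a non-zero positive element pairs strictly positively with any interior point of the dual cone (and vice versa). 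Hence one may replace any such pair $(x, y')$ by a scalar multiple that satisfies the normalization $\langle y', y_0\rangle\langle x_0', x\rangle = 1$ without affecting either vanishing condition. In the remaining degenerate cases $x = 0$ or $y' = 0$, both pairings $\langle y', T_i x\rangle$ vanish trivially, so these cases impose no restriction.

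The main obstacle is really just bookkeeping — checking that the quantifier in the corollary is indeed equivalent to the (normalized) quantifier supplied by the vector lattice cover machinery — rather than any new conceptual step, since all heavy lifting has been done in Theorem~\ref{thm:vcl-ops} and Corollary~\ref{cor:disjoint-determine-pos}. I would write the proof as essentially one paragraph: invoke Corollary~\ref{cor:disjoint-determine-pos} for $S'$ from Theorem~\ref{thm:vcl-ops}, then note the scale-invariance of the zero condition and the strict positivity of the normalization factors on non-zero extremal vectors to drop the normalization.
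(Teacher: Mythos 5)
Your proof is correct and follows essentially the same route as the paper: the paper checks disjointness in the vector lattice cover $\rmC(\overline{S'})$ from Theorem~\ref{thm:vcl-ops} and uses density of $S'$ in $\overline{S'}$, which is exactly the mechanism packaged in Corollary~\ref{cor:disjoint-determine-pos} that you invoke. Your explicit handling of the normalization (strict positivity of $\langle x_0',x\rangle$ and $\langle y',y_0\rangle$ for non-zero extremal $x$, $y'$, plus scale-invariance of the vanishing condition) is a point the paper's proof leaves implicit, and your argument for it is sound.
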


\begin{proof}
	Consider the vector lattice cover of $\calC(X,Y)$ given by the map $\Phi$ from Theorem \ref{thm:vcl-ops}. 
	Then $T_1$ and $T_2$ are disjoint if and only if 
	$\Phi(T_1)$ and $\Phi(T_2)$ are disjoint elements of the vector lattice $\rmC( \overline{S'} )$. 
	Since $S'$ is dense in $ \overline{S'} $, 
	the latter property is in turn equivalent to the function $\Phi(T_1) \Phi(T_2)$ vanishing on $S'$, 
	which proves the claim.
\end{proof}

Note that the result for functionals in  Corollary~\ref{cor:disjoint-determine-pos} 
can be re-obtained as a special case of Corollary~\ref{cor:disjoint-operators} 
by setting $Y := Z$ and $X := \bbR$ in the latter result.

Theorem~\ref{thm:vcl-ops} provides a tractable method to determine the operators in $\mathcal{C}(X,Y)$ that have a modulus in $\mathcal{C}(X,Y)$. With the aid of \cite[Theorem 35]{SteKalGaa2021}, we obtain the following proposition, where the disjointness of the operators in (iii) can be checked by Corollary  \ref{cor:disjoint-operators}.

\begin{proposition} 
	\label{thm:disjoint_operators}
	Consider the setting of Theorem \ref{thm:vcl-ops}. 
	For an operator $T\in \mathcal{C}(X,Y)$, the following statemens are equivalent.
	\begin{enumerate}[label=\upshape(\roman*)]
		\item
		$T$ has a modulus in $\mathcal{C}(X,Y)$.
		
		\item 
		There is $\tau\in\Phi[\mathcal{C}(X,Y)]$ such that
		the pointwise modulus of $\Phi(T)$ restricted to $S'$ equals $\tau|_{S'}$.
		
		\item 
		There are positive disjoint operators $T_1, T_2\in \mathcal{C}(X,Y)$ such that $T=T_1-T_2$.
	\end{enumerate}
\end{proposition}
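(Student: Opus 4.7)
The plan is to use the vector lattice cover $\Phi\colon \mathcal{C}(X,Y) \to \rmC(\overline{S'})$ from Theorem~\ref{thm:vcl-ops} to transport everything into the vector lattice $\rmC(\overline{S'})$, where the modulus is given pointwise, and then to invoke \cite[Theorem~35]{SteKalGaa2021} for the characterization of the existence of a modulus via a vector lattice cover.

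For the equivalence \emph{(i)}~$\Leftrightarrow$~\emph{(ii)}, I would first observe that the pointwise modulus $|\Phi(T)|$ of the continuous function $\Phi(T)\in\rmC(\overline{S'})$ is itself continuous on $\overline{S'}$, and that any $\tau\in \Phi[\mathcal{C}(X,Y)]$ is continuous on $\overline{S'}$ as well. Since $S'$ is dense in $\overline{S'}$, the existence of $\tau \in \Phi[\mathcal{C}(X,Y)]$ with $\tau|_{S'} = |\Phi(T)||_{S'}$ is equivalent to $|\Phi(T)|$ belonging to the image $\Phi[\mathcal{C}(X,Y)]$ in the vector lattice $\rmC(\overline{S'})$. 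By \cite[Theorem~35]{SteKalGaa2021}, applied to the vector lattice cover furnished by Theorem~\ref{thm:vcl-ops}, this is in turn equivalent to $T$ having a modulus in $\mathcal{C}(X,Y)$; this yields \emph{(i)}~$\Leftrightarrow$~\emph{(ii)}.

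For \emph{(i)}~$\Rightarrow$~\emph{(iii)}, I would set $T_1 := \tfrac{1}{2}(|T|+T)$ and $T_2 := \tfrac{1}{2}(|T|-T)$, which lie in $\mathcal{C}(X,Y)_+$ since $|T|\ge \pm T$, and manifestly satisfy $T = T_1 - T_2$. To show they are disjoint in $\mathcal{C}(X,Y)$, I would use the fact recalled in Section~\ref{sec:prelim} that disjointness in a pre-Riesz space is equivalent to disjointness of the images under a vector lattice cover. Since $\Phi(|T|) = |\Phi(T)|$ (again by \cite[Theorem~35]{SteKalGaa2021}), the images $\Phi(T_1)$ and $\Phi(T_2)$ are precisely the pointwise positive and negative parts of $\Phi(T)\in \rmC(\overline{S'})$, and these are pointwise disjoint.

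For the converse \emph{(iii)}~$\Rightarrow$~\emph{(i)}, I would argue directly from the definition of disjointness given in Section~\ref{sec:prelim}. If $T_1,T_2\in \mathcal{C}(X,Y)_+$ are disjoint with $T = T_1 - T_2$, then $\{T_1+T_2,\,-T_1-T_2\}$ and $\{T_1-T_2,\,T_2-T_1\} = \{T,-T\}$ have the same upper bounds in $\mathcal{C}(X,Y)$. Since $T_1+T_2 \ge -T_1-T_2$, the element $T_1+T_2$ is itself the least upper bound of $\{T_1+T_2,-T_1-T_2\}$, hence also the supremum of $\{T,-T\}$; thus $|T|$ exists in $\mathcal{C}(X,Y)$ and equals $T_1+T_2$. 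The only real subtlety throughout is the careful bookkeeping between density of $S'$ in $\overline{S'}$ and the cover-based characterization of moduli; once \cite[Theorem~35]{SteKalGaa2021} is available, the remaining arguments are essentially formal manipulations of the pre-Riesz space definitions.
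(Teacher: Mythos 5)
Your proposal is correct and follows essentially the same route as the paper, which gives no written proof beyond observing that the statement follows from \cite[Theorem~35]{SteKalGaa2021} applied to the vector lattice cover $\Phi\colon \mathcal{C}(X,Y)\to\rmC(\overline{S'})$ of Theorem~\ref{thm:vcl-ops}. You merely make explicit the routine details the paper leaves implicit (the density of $S'$ in $\overline{S'}$ for the equivalence of (ii) with $|\Phi(T)|\in\Phi[\mathcal{C}(X,Y)]$, the identity $\Phi(|T|)=|\Phi(T)|$, and the elementary upper-bound argument for (iii)~$\Rightarrow$~(i)), all of which are sound.
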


To describe the set of elements that have a modulus, the notion of a band is useful. 
Recall that, in a pre-Riesz space $Z$, the disjoint complement of a set $B\subseteq Z$ is denoted by $B^\mathrm{d}$. 
A set $B\subseteq Z$ is called a \emph{band} if $B=B^\mathrm{dd}$. 
A reformulation of \cite[Theorem~35]{SteKalGaa2021} gives the following characterization of the set of elements of $Z$ that have a modulus.

\begin{proposition} \label{pro:set_modulus}
	Let $Z$ be a pre-Riesz space.
	 The set of elements in $Z$ that possess a modulus in $Z$ equals 
	\begin{equation}
		\label{set_modulus}
		\bigcup_{B\subseteq Z \text{ band}} \left(B_+- B_+^\mathrm{d}\right).
	\end{equation}
\end{proposition}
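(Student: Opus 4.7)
The plan is to prove the equality by two inclusions, invoking \cite[Theorem~35]{SteKalGaa2021} -- the same result that underlies the equivalence (i)$\Leftrightarrow$(iii) in Proposition~\ref{thm:disjoint_operators} above -- as a black box: in any pre-Riesz space $Z$, an element $z \in Z$ has a modulus in $Z$ if and only if $z = z_1 - z_2$ for some $z_1, z_2 \in Z_+$ with $z_1 \perp z_2$.

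For the inclusion ``$\supseteq$'', I would take $z = z_1 - z_2$ with $z_1 \in B_+$ and $z_2 \in B_+^\mathrm{d}$ for some band $B \subseteq Z$. By the very definition of the disjoint complement, every element of $B^\mathrm{d}$ is disjoint from every element of $B$, so $z_1 \perp z_2$. Since additionally $z_1, z_2 \in Z_+$, the cited characterization immediately yields that $z$ has a modulus in $Z$.

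For the converse ``$\subseteq$'', let $z$ possess a modulus. The cited characterization furnishes a decomposition $z = z_1 - z_2$ with $z_1, z_2 \in Z_+$ and $z_1 \perp z_2$. I would set $B := \{z_1\}^\mathrm{dd}$. Using the standard identity $A^\mathrm{ddd} = A^\mathrm{d}$ -- valid for any subset $A$ of a pre-Riesz space, since $A \subseteq A^\mathrm{dd}$ combined with the order-reversing nature of $\mathrm{d}$ gives both inclusions -- one obtains $B^\mathrm{dd} = \{z_1\}^\mathrm{dddd} = \{z_1\}^\mathrm{dd} = B$, so $B$ is a band, and moreover $B^\mathrm{d} = \{z_1\}^\mathrm{ddd} = \{z_1\}^\mathrm{d}$. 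Clearly $z_1 \in B_+$, and from $z_2 \geq 0$ together with $z_2 \perp z_1$ one concludes $z_2 \in B_+^\mathrm{d}$. Therefore $z \in B_+ - B_+^\mathrm{d}$, as required.

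The argument is essentially a translation between the decomposition characterization of \cite[Theorem~35]{SteKalGaa2021} and the band language, combined with the elementary closure identities of the disjoint-complement operator, so I do not expect any substantive obstacle here. The real content lies in the cited theorem, which is invoked in precisely the form needed; the unpacking above only relies on formal properties of $\mathrm{d}$ that hold in every pre-Riesz space.
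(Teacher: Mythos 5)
Your argument is correct and matches the paper's approach: the paper states this proposition as a direct reformulation of \cite[Theorem~35]{SteKalGaa2021} (the same decomposition characterization it uses for (i)$\Leftrightarrow$(iii) in the preceding proposition) and gives no further details. You merely spell out the translation the paper leaves implicit, using the standard facts $A \subseteq A^{\mathrm{dd}}$ and $A^{\mathrm{ddd}} = A^{\mathrm{d}}$ for disjoint complements in pre-Riesz spaces, and the choice $B := \{z_1\}^{\mathrm{dd}}$ does exactly what is needed.
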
 

If $Z=\mathcal{C}(X,Y)$ as in Theorem \ref{thm:vcl-ops}, $Z$ is, in particular, 
an order unit space with $\overline{\extreme \Sigma}=\overline{S'}$ as in the proof of Theorem~\ref{thm:vcl-general}. 
By \cite[Theorem~4.4.17]{KalGaa2019}, $B\subseteq Z$ is a band if and only if it is the zero set 
of a so-called \emph{bisaturated} subset of $\overline{\extreme \Sigma}$. 
To calculate the set in \eqref{set_modulus}, it remains to find all bisaturated subsets of $\overline{S'}$. 
This is tractable whenever $S'$ is closed. 
\begin{example} Let $X$ and $Y$ be finite-dimensional vector spaces with generating polyhedral cones  $X_+$ and $Y_+$, respectively. Then $Y'_+$ is generating and polyhedral, and $S'$ is a finite set.
	 Corollary~\ref{cor:disjoint-operators} shows that disjointness of two operators in  $\mathcal{L}(X,Y)$ 
	can be verified by checking finitely many equalities.
	Moreover, there are only finitely many bisaturated subsets of $\overline{S'}$, and they can be calculated directly by means of linear algebra. This gives the set of all bands in $\mathcal{L}(X,Y)$. Proposition \ref{pro:set_modulus}
provides the set of all operators in $\mathcal{L}(X,Y)$ 
 that have a modulus.
\end{example}

We conclude the paper with a discussion of anti-lattices and with a number of examples.
Recall that an Archimedean ordered vector space $Z$ with generating cone is called an \emph{anti-lattice} 
if any two elements of $Z$ have an infimum if and only if they are comparable 
(i.e., one of the elements is larger than the other). 
Equivalently, two non-zero positive elements of the cone are never disjoint \cite[Theorem~4.1.10]{KalGaa2019}.

\begin{corollary}
	\label{cor:anti-lattice-operators}
	Let the assumptions of Theorem~\ref{thm:vcl-ops} be satisfied. 
	The space $\calC(X,Y)$ is an anti-lattice if and only if both $X'$ and $Y$ are anti-lattices. 
\end{corollary}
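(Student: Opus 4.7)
The plan is to use the disjointness criterion from Corollary~\ref{cor:disjoint-operators} as the principal tool, together with its analogues in $X'$ and $Y$ provided by Corollary~\ref{cor:disjoint-determine-pos}. First, $\calC(X,Y)$ is an order unit space with order unit $z_0 = y_0 \otimes x_0'$ and hence is Archimedean with generating cone, so the notion of anti-lattice is well defined. Likewise, Corollary~\ref{cor:disjoint-determine-pos} applies to $X'$: the interior of $X'_+$ contains $x_0'$, and the extremal vectors of $X_+$, embedded into $X''_+$, remain extremal there by hypothesis~\ref{thm:vcl-ops:itm:X} and determine positivity of $X'$ because the convex hull of the extremals of $X_+$ is dense in $X_+$.

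For the ``only if'' direction I argue by contraposition. Suppose $Y$ is not an anti-lattice, and pick non-zero disjoint $y_1, y_2 \in Y_+$. The rank-$1$ operators $y_j \otimes x_0' \in \calC(X,Y)_+$ are non-zero, and
\[
	\langle y', (y_j \otimes x_0') x \rangle = \langle x_0', x \rangle \langle y', y_j \rangle
\]
for all $x \in X$ and $y' \in Y'$. Since $x_0'$ is an interior point of $X'_+$, one has $\langle x_0', x\rangle > 0$ for every non-zero $x \in X_+$, so Corollary~\ref{cor:disjoint-operators} combined with Corollary~\ref{cor:disjoint-determine-pos} applied to $y_1, y_2$ in $Y$ shows that $y_1 \otimes x_0'$ and $y_2 \otimes x_0'$ are disjoint in $\calC(X,Y)$. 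Hence $\calC(X,Y)$ is not an anti-lattice. If instead $X'$ is not an anti-lattice, one argues symmetrically using operators of the form $y_0 \otimes x_j'$ for disjoint non-zero $x_j' \in X'_+$ together with the strict positivity of $y_0$ on $Y'_+\setminus\{0\}$ coming from $y_0$ being interior to $Y_+$.

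For the ``if'' direction, assume both $X'$ and $Y$ are anti-lattices and take $T_1, T_2 \in \calC(X,Y)_+\setminus\{0\}$. Suppose for a contradiction that $T_1$ and $T_2$ are disjoint, so by Corollary~\ref{cor:disjoint-operators} at least one of $\langle y', T_1 x\rangle$ and $\langle y', T_2 x\rangle$ vanishes for every pair of extremals $x \in X_+$ and $y' \in Y'_+$. For a fixed extremal $y'$, the positive functionals $x \mapsto \langle y', T_i x \rangle$ in $X'_+$ thus satisfy the disjointness criterion of Corollary~\ref{cor:disjoint-determine-pos} in $X'$, and the anti-lattice property of $X'$ forces one of them to vanish identically. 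Therefore, for every extremal $y' \in Y'_+$, either $\langle y', T_1 x\rangle = 0$ for all $x$ or $\langle y', T_2 x\rangle = 0$ for all $x$. Fixing now an arbitrary $x \in X_+$, it follows that $T_1 x$ and $T_2 x$ are disjoint in $Y$ by Corollary~\ref{cor:disjoint-determine-pos}, and the anti-lattice property of $Y$ forces $T_1 x = 0$ or $T_2 x = 0$. Finally, choosing $x_i \in X_+$ with $T_i x_i \ne 0$ and setting $x := x_1 + x_2 \in X_+$, a vanishing $T_i x = 0$ would give $T_i x_i = -T_i x_{3-i} \in Y_+ \cap (-Y_+) = \{0\}$, contradicting the choice of $x_i$. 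The most delicate point of the argument is the invocation of Corollary~\ref{cor:disjoint-determine-pos} inside the dual space $X'$, which is exactly what hypothesis~\ref{thm:vcl-ops:itm:X} on extremality in the bidual was tailored to enable.
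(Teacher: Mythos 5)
Your proof is correct and follows essentially the same route as the paper: both directions rest on the disjointness criteria of Corollaries~\ref{cor:disjoint-operators} and~\ref{cor:disjoint-determine-pos} (the latter applied to $X'$ is exactly the paper's ``preliminary observation'', which it obtains instead by specializing Corollary~\ref{cor:disjoint-operators} to $Y=\bbR$), together with the same rank-one operators $y_0\otimes x_j'$ and $y_j\otimes x_0'$. The remaining differences are only organizational: you run the ``only if'' part by contraposition and the ``if'' part by contradiction, with the $x:=x_1+x_2$ trick replacing the paper's initial choice of functionals $y_1',y_2'$ with $T_i'y_i'\neq 0$, but the underlying mechanism is identical.
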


\begin{proof}
	We start with the following preliminary observation: 
	By applying Corollary~\ref{cor:disjoint-operators} to the special case $Y = \bbR$, 
	we see that two functionals $x_1', x_2' \in X'$ are disjoint if and only if, 
	for all extremal vectors $x$ of $X_+$, one has $\langle x_1', x \rangle = 0$ or $\langle x_2', x \rangle = 0$.
	\begin{itemize}
		\item[``$\Rightarrow$'']
		Assume that $\calC(X,Y)$ is an anti-lattice. 
		We first show that $X'$ is an anti-lattice, so let $x'_1,x'_2 \in X'_+$ be non-zero. 
		Fix a vector $y_0$ in the interior of $Y_+$. 
		Then the positive rank-$1$ operators $y_0 \otimes x'_1$ and $y_0 \otimes x'_2$ are non-zero 
		and thus, they are non-disjoint elements of $\calC(X,Y)$ as we assumed this space to be an anti-lattice. 
		According to Corollary~\ref{cor:disjoint-operators}, there exist extremal vectors 
		$x$ of $X_+$ and $y'$ of $Y'_+$ such that 
		\begin{align*}
			&
			\langle y', y_0 \rangle  \langle x'_1, x \rangle 
			=
			\langle y', (y_0 \otimes x'_1) x \rangle 
			\not= 
			0
			\\ 
			\text{and} \qquad 
			&
			\langle y', y_0 \rangle  \langle x'_2, x \rangle 
			=
			\langle y', (y_0 \otimes x'_2) x \rangle 
			\not= 
			0
			.
		\end{align*}
		In particular, $\langle x'_1, x \rangle \not= 0$ and $\langle x'_2, x \rangle \not= 0$. 
		By the preliminary observation from the beginning of the proof, 
		this implies that $x'_1$ and $x'_2$ are not disjoint.
		
		Now we show that $Y$ is an anti-lattice. 
		To this end, consider non-zero vectors $y_1, y_2 \in Y_+$. 
		Fix a non-zero functional $x'_0 \in X'_+$; 
		such a functional exists since $X$ is non-zero and $X_+$ is a cone. 
		The operators $y_1 \otimes x'_0$ and $y_2 \otimes x'_0$ are non-zero positive elements of $\calC(X,Y)$ 
		and are thus not disjoint in this space, again since $\calC(X,Y)$ is an anti-lattice. 
		Thus, we can again apply Corollary~\ref{cor:disjoint-operators} 
		to find extremal vectors $x$ of $X_+$ and $y'$ of $Y'_+$ such that 
		\begin{align*}
			&
			\langle y', y_1 \rangle  \langle x'_0, x \rangle 
			=
			\langle y', (y_1 \otimes x'_0) x \rangle 
			\not= 
			0
			\\ 
			\text{and} \qquad 
			&
			\langle y', y_2 \rangle  \langle x'_0, x \rangle 
			=
			\langle y', (y_2 \otimes x'_0) x \rangle 
			\not= 
			0
			.
		\end{align*}
		In particular, $\langle y', y_1 \rangle \not= 0$ and $\langle y', y_2 \rangle \not= 0$. 
		Corollary~\ref{cor:disjoint-determine-pos}, 
		applied to the set $S'$ of all extremal vectors of $Y'_+$, 
		yields that $y_1$ and $y_2$ are not disjoint.
		
		\item[``$\Leftarrow$'']
		Assume that $X'$ and $Y$ are anti-lattices and let $T_1, T_2 \in \calC(X,Y)_+$ be non-zero. 
		Then their dual operators are also non-zero. 
		Since $Y_+$ is a cone, the dual cone $Y'_+$ is weak-$\ast$ total in $Y'$, 
		so there exist functionals $y_1', y_2' \in Y'_+$ such that $T_1' y_1' \not= 0$ and $T_2' y_2' \not= 0$. 
		As $X'$ is assumed to be an anti-lattice, 
		the functionals $T_1' y_1'$ and $T_2' y_2'$ are not disjoint. 
		Thus, by the preliminary observation from the beginning of the proof, 
		there exists an  extremal point $x$ of $X_+$ 
		such that $\langle T_1' y'_1, x \rangle \not= 0$ and $\langle T_2' y'_2, x \rangle \not= 0$.
		Hence, both elements $T_1 x$ and $T_2 x$ of $Y_+$ are non-zero. 
		
		Since $Y$ is also assumed to be an anti-lattice, it follows that $T_1 x$ and $T_2 x$ are not disjoint. 
		As $Y_+$ has non-empty interior by assumption~\ref{thm:vcl-ops:itm:Y} of Theorem~\ref{thm:vcl-ops}, 
		it follows from the Krein-Milman theorem that the extremal vectors in $Y'_+$ determine positivity.
		Thus, it follows from the characterization of disjointness in Corollary~\ref{cor:disjoint-determine-pos} 
		that there exists an extremal vector $y'$ of $Y'_+$ 
		such that $\langle y', T_1 x \rangle \not= 0$ and $\langle y', T_2 x \rangle \not= 0$. 
		According to Corollary~\ref{cor:disjoint-operators}, this shows that $T_1$ and $T_2$ are not disjoint. 
		\qedhere
	\end{itemize}
\end{proof}

A stronger property than being an anti-lattice is that two non-zero elements -- no matter whether they are positive -- 
are never disjoint. 
The following is a version of Corollary~\ref{cor:anti-lattice-operators} for such spaces. 
Its proof is almost literally the same as the proof of Corollary~\ref{cor:anti-lattice-operators} 
-- one just has to consider general elements instead of positive elements now.

\begin{corollary}
	\label{cor:no-disjoint-operators}
	Let the assumptions of Theorem~\ref{thm:vcl-ops} be satisfied. 
	There is no pair of non-zero disjoint elements in $\calC(X,Y)$ if and only if 
	the same is true in each of the spaces $X'$ and $Y$.
\end{corollary}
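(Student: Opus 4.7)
The plan is to mimic the proof of Corollary~\ref{cor:anti-lattice-operators} almost verbatim, with the single modification that the witnesses appearing at each step are no longer required to be positive. The one ingredient reused throughout is the \emph{preliminary observation} from that earlier proof: applying Corollary~\ref{cor:disjoint-operators} with target space $\bbR$ (so that $\calC(X,\bbR)=X'$) shows that $x'_1,x'_2\in X'$ are disjoint if and only if, for every extremal vector $x\in X_+$, one has $\langle x'_1,x\rangle=0$ or $\langle x'_2,x\rangle=0$. The derivation of this fact does not use positivity of $x'_1,x'_2$, so it remains available in the present setting.

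For the direction ``$\Rightarrow$'', suppose no two non-zero elements of $\calC(X,Y)$ are disjoint. Given non-zero $x'_1,x'_2\in X'$, fix an interior point $y_0$ of $Y_+$ and consider the rank-one operators $y_0\otimes x'_i$; each is non-zero, because any $x$ with $\langle x'_i,x\rangle\neq 0$ detects it. By hypothesis they are not disjoint, so Corollary~\ref{cor:disjoint-operators} yields extremal $x\in X_+$ and $y'\in Y'_+$ with $\langle y',y_0\rangle\langle x'_i,x\rangle\neq 0$ for $i=1,2$; in particular $\langle x'_1,x\rangle\neq 0$ and $\langle x'_2,x\rangle\neq 0$, so the preliminary observation shows that $x'_1$ and $x'_2$ are not disjoint. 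For $Y$ one argues symmetrically: fix a non-zero $x'_0\in X'_+$ (which exists because $X\neq\{0\}$ and $X_+$ is a cone), apply the hypothesis to the non-zero operators $y_i\otimes x'_0$, and feed the extremal $y'\in Y'_+$ so produced into Corollary~\ref{cor:disjoint-determine-pos} for $Y$ (whose extremal vectors of $Y'_+$ determine positivity because $Y_+$ has non-empty interior).

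For the converse, let $T_1,T_2\in\calC(X,Y)$ be non-zero. Since $Y_+$ is a cone, $Y'_+$ is weak-$\ast$ total in $Y'$, and one can pick $y'_1,y'_2\in Y'_+$ with $T'_iy'_i\neq 0$ in $X'$. By the hypothesis on $X'$, these (possibly signed) functionals are not disjoint, and the preliminary observation supplies an extremal $x\in X_+$ with $\langle y'_i,T_ix\rangle=\langle T'_iy'_i,x\rangle\neq 0$, so $T_1x,T_2x$ are non-zero elements of $Y$. The hypothesis on $Y$ then says they are not disjoint, and Corollary~\ref{cor:disjoint-determine-pos} produces an extremal $y'\in Y'_+$ with $\langle y',T_1x\rangle\neq 0$ and $\langle y',T_2x\rangle\neq 0$; a final application of Corollary~\ref{cor:disjoint-operators} concludes that $T_1$ and $T_2$ are not disjoint. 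No genuinely new obstacle arises; the only point needing care is that the intermediate witnesses $T'_iy'_i\in X'$ and $T_ix\in Y$ may fail to be positive, which is exactly why the anti-lattice property of Corollary~\ref{cor:anti-lattice-operators} must here be strengthened to the absence of \emph{any} non-zero disjoint pair.
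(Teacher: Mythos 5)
Your proposal is correct and is essentially the paper's own argument: the paper proves this corollary by remarking that one repeats the proof of Corollary~\ref{cor:anti-lattice-operators} with general (not necessarily positive) elements, which is exactly what you carry out, including the same use of Corollaries~\ref{cor:disjoint-operators} and~\ref{cor:disjoint-determine-pos} and the preliminary observation for $X'$. Your closing remark that the intermediate witnesses $T_i'y_i'$ and $T_ix$ need not be positive correctly identifies why the stronger hypothesis is needed here.
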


The assumption that no pair of non-zero elements is disjoint is satisfied in
various JBW-algebras, see \cite[Section~4]{vanGaansKalauchRoelands2024}.
An example of an anti-lattice that contains two non-disjoint non-zero elements can be found in \cite[Example~4.1.12]{KalGaa2019}.

\begin{examples}
	\label{exas:various-examples}
	The assumptions in Theorem~\ref{thm:vcl-ops} are satisfied in a variety of situations:
	\begin{enumerate}[label=(\alph*)]
		\item\label{exas:various-examples:itm:finite-dim} 
		If $X,Y$ are finite-dimensional ordered vector spaces with closed and generating cones, 
		then the assumptions of Theorem~\ref{thm:vcl-ops} are satisfied.

		\item\label{exas:various-examples:itm:ell-1} 
		The sequence space $X :=\ell^1$ with its usual norm and the componentwise order 
		satisfies assumption~\ref{thm:vcl-ops:itm:X} of Theorem~\ref{thm:vcl-ops}. 
		
		\item\label{exas:various-examples:itm:trace-class} 
		Let $H$ be a Hilbert space. 
		The space $X$ of self-adjoint trace class operators on $H$, 
		endowed with the so-called \emph{Loewner cone} of operators $A \in X$ that satisfy $(v | Av) \ge 0$ for all $v \in H$, 
		satisfies assumption~\ref{thm:vcl-ops:itm:X} of Theorem~\ref{thm:vcl-ops}. 
		
		The extremal vectors of $X_+$ are precisely the strictly positive multiples of the rank-$1$ projections on $H$ in this case. 
		By the spectral theorem for compact self-adjoint operators, their convex hull is dense in $X_+$. 
		Moreover, every strictly positive multiple of a rank-$1$ projection on $H$ is also extremal in $X''_+$. 
		Indeed, $X''_+$ is the cone of positive semi-definite bounded linear operators on $H$ 
		and, thus, $X$ is an order ideal in $X''$.
		
		\item\label{exas:various-examples:itm:centered-cones} 
		Let $X$ be a real Banach space, let $x_0 \in X$ and $x_0' \in X'$ be such that $\langle x_0', x_0 \rangle = 1$ 
		and endow $X$ with the \emph{centered cone}
		\begin{align*}
			X_+ := \{x + rx_0 \colon \, x \in \ker x', \; r \ge \norm{x}\}.
		\end{align*}
		Then $X_+$ is a closed cone with non-empty interior and there exists an equivalent norm on $X$ 
		that is additive on $X_+$; 
		see, e.g., \cite[Section~4.1]{Glueck2016} for details. 
		
		If the space $X$ is reflexive, then it follows that $X$ satisfies 
		assumption~\ref{thm:vcl-ops:itm:X} of Theorem~\ref{thm:vcl-ops}.
	\end{enumerate}
\end{examples}

\begin{examples}
	\label{exas:no-disjoint-operators}
	We give two concrete examples in which there exists no pair of non-zero disjoint operators in $\calC(X,Y)$.
	\begin{enumerate}[label=(\alph*)]
		\item\label{exas:no-disjoint-operators:itm:matrices-loewner} 
		Let $m,n \ge 0$ be integers and let $\bbC^{m \times m}_{\mathrm{sa}}$ and $\bbC^{n \times n}_{\mathrm{sa}}$ denote 
		the spaces of all self-adjoint complex square matrices in dimensions $m$ and $n$, respectively, 
		that we endow with the cones of positive semi-definite matrices.
		
	 	Endow the space $\calL(\bbC^{m \times m}_{\mathrm{sa}}, \bbC^{n \times n}_{\mathrm{sa}})$ 
	 	of linear operators with the cone of positive operators. 
	 	Two non-zero operators in this space are never disjoint.
		 
		\item\label{exas:no-disjoint-operators:itm:spin-factors}
		Let $H$ be a real Hilbert space of dimension $\ge 2$ and endow the Hilbert space $H \times \bbR$ with the 
		\emph{ice cream cone}
		\begin{align*}
			\{(h,\alpha)\colon \; \alpha \ge \norm{h}\}.
		\end{align*}
		The space $H \times \bbR$ with this cone is sometimes called a \emph{spin factor}. 
		Since $H \times \bbR$ is a Hilbert space, it has the approximation property and, hence, 
		the space $\calC(H \times \bbR, H \times \bbR)$ consists of all compact linear operators on $H \times \bbR$. 
		
		Two non-zero operators in $\calC(H \times \bbR, H \times \bbR)$ are never disjoint.
	\end{enumerate}
\end{examples}

\begin{proof}
	\ref{exas:no-disjoint-operators:itm:matrices-loewner} 
	There are no pairs of non-zero disjoint elements in $\bbC^{m \times m}_{\mathrm{sa}}$ or $\bbC^{n \times n}_{\mathrm{sa}}$, 
	see e.g.\ \cite[Proposition~4.1.11]{KalGaa2019} or \cite[Proposition~4.2]{vanGaansKalauchRoelands2024}.
	Moreover, the space $\bbC^{m \times m}_{\mathrm{sa}}$ is order isomorphic to its dual space endowed with the dual cone. 
	Finally, according to Example~\ref{exas:various-examples}\ref{exas:various-examples:itm:finite-dim} 
	the assumptions of Theorem~\ref{thm:vcl-ops} are satisfied. 
	Hence, Corollary~\ref{cor:no-disjoint-operators} tells us that there is no pair of non-zero disjoint elements 
	in the space $\calL(\bbC^{m \times m}_{\mathrm{sa}}, \bbC^{n \times n}_{\mathrm{sa}}) = 
	\calC(\bbC^{m \times m}_{\mathrm{sa}}, \bbC^{n \times n}_{\mathrm{sa}})$.
	
	\ref{exas:no-disjoint-operators:itm:spin-factors} 
	The space $H \times \bbR$ with the ice-cream cone is order isomorphic to its dual space with the dual cone. 
	Moreover, the ice-cream cone is a special case of a centered cone 
	as defined in Example~\ref{exas:various-examples}\ref{exas:various-examples:itm:centered-cones}, 
	so the assumptions of Theorem~\ref{thm:vcl-ops} are satisfied. 
	Finally, there is no pair of non-zero disjoint elements in $H \times \bbR$ 
	according to \cite[Proposition~4.3]{vanGaansKalauchRoelands2024}. 
	Thus, Corollary~\ref{cor:no-disjoint-operators} shows that the same is true in $\calC(H \times \bbR, H \times \bbR)$. 
\end{proof}

\bibliographystyle{plain}
\bibliography{literature}

\end{document}